\documentclass[reqno,12pt]{article}

\usepackage{amssymb,amsmath}

\usepackage{bbm}
\usepackage{pbox}
\usepackage{mathrsfs}

\usepackage{multicol}
\usepackage{amsthm}

\usepackage[colorlinks = true,
            linkcolor = blue,
            urlcolor  = blue,
            citecolor = blue,
            anchorcolor = blue]{hyperref}

\usepackage[affil-it]{authblk}

\renewcommand{\phi}{\varphi}
\newcommand{\Lie}{\operatorname{Lie}}
\renewcommand{\L}{\mathcal L}
\newcommand{\C}{\mathbb{C}}
\newcommand{\N}{\mathcal{N}}
\newcommand{\M}{\mathcal{M}}
\newcommand{\gl}{\mathfrak{gl}}
\newcommand{\R}{\mathbb{R}}
\newcommand{\Z}{\mathbb{Z}}
\newcommand{\g}{\mathfrak g}
\newcommand{\h}{\mathfrak h}
\newcommand{\tr}{\operatorname{tr}}
\newcommand{\w}{\wedge}
\newcommand{\om}{\omega}
\newcommand{\Om}{\Omega}
\newcommand{\A}{\mathscr A}
\renewcommand{\P}{\mathcal P}
\newcommand{\Hom}{\text{Hom}}
\newcommand{\eps}{\epsilon}
\newcommand{\ad}{\operatorname{ad}}
\newcommand{\Ad}{\operatorname{Ad}}
\newcommand{\su}{\mathfrak{su}}
\newcommand{\G}{\mathcal G}
\renewcommand{\sl}{\mathfrak{sl}}
\renewcommand{\u}{\mathfrak u}
\newcommand{\Res}{\operatorname{Res}}
\newcommand{\diag}{\operatorname{diag}}
\renewcommand{\Re}{\operatorname{Re}}
\renewcommand{\Im}{\operatorname{Im}}
\renewcommand{\O}{\mathcal O}

\newtheorem{lemma}{Lemma}  
\newtheorem{theorem}{Theorem}
\newtheorem*{theorem*}{Theorem}
\newtheorem{corollary}{Corollary}
\newtheorem{definition}{Definition}
\newtheorem{proposition}{Proposition}
\newtheorem*{prop*}{Proposition}
\newtheorem*{remark}{Remark}

\title{A hyperholomorphic line bundle on certain hyperk\"ahler manifolds not admitting an $S^1$-symmetry}
\author{Eric O. Korman \\
\small Department of Mathematics \\
\small The University of Texas at Austin \\
\small 2515 Speedway, RLM 8.100 \\
\small Austin, TX 78712 \\
\normalfont{\url{ekorman@math.utexas.edu}} \\
}

\date{}
\begin{document}
\maketitle 
\begin{abstract}
Generalizing work of Haydys \cite{Haydys:2008} and Hitchin \cite{Hitchin:2012}, we prove the existence of a hyperholomorphic line bundle on certain hyperk\"ahler manifolds that do not necessarily admit an $S^1$ action.  As examples, we consider the moduli space of (non-strongly) parabolic Higgs bundles, the moduli space of solutions to Nahm's equations, and Nakajima quiver varieties.
\end{abstract}

\tableofcontents

\section{Introduction}
A hyperholomorphic line bundle over a hyperk\"ahler manifold is a line bundle with connection whose curvature is of type (1,1) in each complex structure.  Conversely, given an integral 2-form that is of type (1,1) in each complex structure we can find a line bundle with connection of that curvature.  Work of Haydys \cite{Haydys:2008} and Hitchin \cite{Hitchin:2012,Hitchin:2013} has shown the existence of a canonical hyperholomorphic line bundle on a hyperk\"ahler manifold admitting an $S^1$ action that preserves the metric and one complex structure while rotating the other two.  Specifically, they prove

\begin{theorem} \label{thm:11form}
Suppose $(M, g, \om_I, \om_J, \om_K)$ is a hyperk\"ahler manifold with an isometric action of $S^1$ such that
\[
\L_X \om_I = 0, ~~ \L_X \om_J = -\om_K, ~~ \L_X \om_K = \om_J,
\]
or, equivalently,
\[
d\alpha = 0, ~~ d(J\alpha) = \om_J, ~~ d(K\alpha) = \om_K,
\]
where $X$ is the Killing vector field that generates the $S^1$ action and $\alpha = i_X \om_I$.  Then 
\[
\om_I + d d_I^c \mu
\]
is of type (1,1) in every complex structure, where $\mu \in C^\infty(M)$ is the moment map for the $S^1$-action (in symplectic structure $\om_I$).  In particular, if $(M,\om_I)$ is prequantizable then $M$ admits a hyperholomorphic line bundle with the above form as its curvature.
\end{theorem}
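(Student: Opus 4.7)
The plan is to verify directly, for each $\Phi \in \{I, J, K\}$, that $\Phi^*\beta = \beta$, where $\beta := \om_I + dd_I^c\mu$ and $(\Phi^*\beta)(Y,Z) := \beta(\Phi Y, \Phi Z)$; this is equivalent to $\beta$ being of type $(1,1)$ in complex structure $\Phi$. Once established, the prequantization conclusion follows from the standard fact that an integral closed $2$-form arises as the curvature of a unitary connection on some line bundle, whose hyperholomorphicity is then precisely the type-$(1,1)$ condition.

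The first step is to identify $d_I^c\mu$ concretely. The moment map relation $d\mu = i_X\om_I = \alpha$ combined with $d_I^c = -d\circ I$ on functions, together with the $I$-invariance of $g$ (so that $\alpha(IY) = g(IX, IY) = g(X, Y)$), yields $d_I^c\mu = -X^\flat$, where $X^\flat = g(X, \cdot)$. Thus $\beta = \om_I - dX^\flat$, reducing the problem to analyzing $dX^\flat$.

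The second step encodes the hypotheses as commutators of $\nabla X$. Because $X$ is Killing, $\nabla X$ is pointwise skew-adjoint and $dX^\flat(Y, Z) = 2g(\nabla_Y X, Z)$. The general identity $(\L_X \Phi)(Y) = \Phi\nabla_Y X - \nabla_{\Phi Y}X$, valid for any parallel tensor $\Phi$ (applicable here since $\nabla I = \nabla J = \nabla K = 0$), converts the hypothesized Lie derivative relations into
\[
[\nabla X, I] = 0, \qquad [\nabla X, J] = K, \qquad [\nabla X, K] = -J.
\]
A short computation using these commutators and the quaternion relations $JK = I$, $KJ = -I$ then gives $I^* dX^\flat = dX^\flat$ and $J^* dX^\flat = K^* dX^\flat = dX^\flat - 2\om_I$. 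Combined with the direct calculation $I^*\om_I = \om_I$ and $J^*\om_I = K^*\om_I = -\om_I$, we conclude $\Phi^*\beta = \beta$ for every $\Phi \in \{I, J, K\}$, which is the desired type-$(1,1)$ claim.

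The proof is largely bookkeeping; its conceptual heart is the commutator $[\nabla X, J] = K$, which quantitatively expresses that the failure of $X$ to preserve the $J$-Kähler structure is precisely enough to cancel the $(2, 0)_J$-component of $\om_I$. The main obstacle is keeping sign conventions (for the moment map, $d_I^c$, and the action of $I, J, K$ on $1$-forms) consistent across the calculation; with the wrong convention the final cancellation merely flips sign and fails. A more conceptual alternative would work in the twistor/holomorphic-symplectic language using $\om_J+i\om_K$, $\om_K+i\om_I$, $\om_I+i\om_J$, but the essential input is the same Killing-field commutator above.
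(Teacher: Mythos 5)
Your proof is correct, but it takes a genuinely different route from the paper's. The paper proves the more general Theorem \ref{thm:mainX} (of which Theorem \ref{thm:11form} is the special case $F_1=F_2=0$) purely at the level of the $1$-form $\alpha$: it expands $d(I\alpha)(Iv,Iw)$ and $d(I\alpha)(Jv,Jw)$ via the invariant formula for $d$, invokes the vanishing of the Nijenhuis tensor of each complex structure, and feeds in the hypotheses $d\alpha=0$, $d(J\alpha)=\om_J+F_1$, $d(K\alpha)=\om_K+F_2$. No metric connection and, crucially, no Killing hypothesis is used. You instead work Riemannian-geometrically: $d_I^c\mu=-I\alpha=-X^\flat$ (consistent with the paper's convention $(I\alpha)(v)=\alpha(Iv)$), then $dX^\flat(\cdot,\cdot)=2g(\nabla_{(\cdot)}X,\cdot)$ for Killing $X$, and the hypotheses become $[\nabla X,I]=0$, $[\nabla X,J]=K$, $[\nabla X,K]=-J$ via $\L_X\Phi=-[\nabla X,\Phi]$ for $\nabla$-parallel $\Phi$; the quaternion relations then give $J^*dX^\flat=K^*dX^\flat=dX^\flat-2\om_I$, which exactly cancels $J^*\om_I=K^*\om_I=-\om_I$. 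I checked the signs and they work out. What each approach buys: yours is arguably the cleaner and more conceptual argument for the Killing case, packaging everything into commutators of the skew endomorphism $\nabla X$ with the quaternionic structure, and it uses the hyperk\"ahler condition via $\nabla I=\nabla J=\nabla K=0$ rather than via integrability. Its cost is that it leans essentially on $X$ being Killing (both $dX^\flat=2g(\nabla X\cdot,\cdot)$ and the translation $\L_X\om_\Phi=g((\L_X\Phi)\cdot,\cdot)$ fail otherwise), so it does not extend as written to the paper's main result, Theorem \ref{thm:mainX}, where $X$ is an arbitrary vector field and extra $(1,1)$-terms $F_1,F_2$ appear; the paper's Nijenhuis-tensor argument is designed precisely to survive that generalization.
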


This line bundle is used in a correspondence between hyperk\"ahler and quaternion K\"ahler manifolds \cite{Haydys:2008,Hitchin:2012} and also appears in physics in the case that $M$ is the moduli space of Higgs bundles \cite{Neitzke:2011}.

We will generalize theorem \ref{thm:11form} to
\begin{theorem}\label{thm:mainX}
Let $X$ be any vector field (not necessarily Killing) on a hyperk\"ahler manifold $M$ and let $\alpha = i_X \om_I$.  Suppose there are 2-forms $F_1, F_2$ of type (1,1) in each complex structure such that the following equations,
\begin{equation}
\L_X \om_I = 0, ~~ \L_X \om_J = -\om_K - F_2, ~~ \L_X \om_K = \om_J + F_1 \label{Xprops},
\end{equation}
which are equivalent to
\begin{equation}
d\alpha = 0, ~ d(J\alpha) = \om_J + F_1, ~ d(K\alpha) = \om_K + F_2, \label{alphaprops}
\end{equation}
are satisfied.  Then
\[
\om_I - d(I\alpha)
\]
is of type (1,1) in each complex structure.
\end{theorem}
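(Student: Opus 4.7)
\emph{Proof proposal.} The plan is to verify the equivalence of (\ref{Xprops}) and (\ref{alphaprops}), and then to show that $\omega_I - d(I\alpha)$ has vanishing $(2,0)+(0,2)_A$-part for each $A \in \{I,J,K\}$ by a separate type-decomposition analysis in each complex structure.

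Setting $\alpha_A := i_X\omega_A = (AX)^\flat$ and using $d\omega_A = 0$, Cartan's formula gives $\mathcal L_X\omega_A = d\alpha_A$. Under the convention $(A\beta)(V) := \beta(AV)$, the identities $g(AY,AZ)=g(Y,Z)$ and the quaternion relations yield $I\alpha = X^\flat$, $J\alpha = \alpha_K$, and $K\alpha = -\alpha_J$, which makes (\ref{Xprops}) and (\ref{alphaprops}) equivalent.

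The structural observation driving the proof is that $X^\flat = A\alpha_A$ for \emph{every} $A \in \{I,J,K\}$, so $d(I\alpha) = dX^\flat$ admits three descriptions amenable to type analysis. For fixed $A$, decompose $\alpha_A = \alpha_A^{1,0_A} + \alpha_A^{0,1_A}$ into $A$-types; since $A$ acts as $\pm i$ on these pieces, $X^\flat = i(\alpha_A^{1,0_A} - \alpha_A^{0,1_A})$, and applying $d$ and extracting $A$-types yields
\[
(dX^\flat)^{(2,0)_A} = i\,(d\alpha_A)^{(2,0)_A}, \qquad (dX^\flat)^{(0,2)_A} = -i\,(d\alpha_A)^{(0,2)_A}.
\]
Because $F_1$ and $F_2$ are $(1,1)$ in every complex structure, the hypotheses pin down $(d\alpha_A)^{(2,0)+(0,2)_A}$: it is $0$ for $A = I$ (from $d\alpha = 0$), $-\omega_K$ for $A = J$, and $\omega_J$ for $A = K$.

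It then suffices to establish the intrinsic hyperk\"ahler identities
\[
\omega_K^{(2,0)_J} - \omega_K^{(0,2)_J} = i\omega_I, \qquad \omega_J^{(2,0)_K} - \omega_J^{(0,2)_K} = -i\omega_I,
\]
each of which reduces, via the formula $\pi^{2,0}_A - \pi^{0,2}_A = -\tfrac{i}{2}(A^*_1 + A^*_2)$ for the type projectors on 2-forms, to a direct check from $JK = I$, $KJ = -I$, and the skew-adjointness of $A$ with respect to $g$. Combining these with the displayed formula gives $(dX^\flat)^{(2,0)+(0,2)_A} = \omega_I$ when $A \in \{J,K\}$ and $=0$ when $A = I$; in every case this matches $\omega_I^{(2,0)+(0,2)_A}$, so $(\omega_I - d(I\alpha))^{(2,0)+(0,2)_A} = 0$ for each $A$, i.e.\ $\omega_I - d(I\alpha)$ is of type $(1,1)$ in each complex structure. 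The main obstacle is the final step, where one must keep careful track of the sign conventions for how $I, J, K$ act on 1-forms versus vectors and verify the two intrinsic identities.
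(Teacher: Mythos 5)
Your proposal is correct, and it takes a genuinely different route from the paper. The paper works pointwise with vector fields: it expands $d(I\alpha)(Av,Aw)$ via the formula for $d$ on a $1$-form and invokes the Nijenhuis identity $[Av,Aw]=[v,w]+A[Av,w]+A[v,Aw]$ to trade bracket terms for $d\alpha$, $d(J\alpha)$, $d(K\alpha)$, then finishes with the quaternion relations on $g$. You instead observe that $I\alpha = X^\flat = A\,i_X\om_A$ for \emph{every} $A\in\{I,J,K\}$, so that $d(I\alpha)=dX^\flat$ can be type-analyzed in each complex structure through the corresponding $\alpha_A = i_X\om_A$; integrability enters only through the fact that $d$ carries $(1,0)_A$-forms into $(2,0)_A\oplus(1,1)_A$ (so $(dX^\flat)^{(2,0)_A}=i(d\alpha_A)^{(2,0)_A}$ and $(dX^\flat)^{(0,2)_A}=-i(d\alpha_A)^{(0,2)_A}$), which is the same Nijenhuis input as in the paper but packaged sheaf-theoretically rather than via brackets. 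I checked the two intrinsic identities you defer to a "direct check": with the conventions $(A\beta)(V)=\beta(AV)$ and $IJ=K$, one has $\om_K+i\om_I$ of type $(2,0)_J$ and $\om_J-i\om_I$ of type $(2,0)_K$, which give $\om_K^{(2,0)_J}-\om_K^{(0,2)_J}=i\om_I$ and $\om_J^{(2,0)_K}-\om_J^{(0,2)_K}=-i\om_I$ as you claim, and $\om_I$ is indeed purely of type $(2,0)+(0,2)$ in both $J$ and $K$, so the final matching goes through. What your approach buys is conceptual transparency: it makes immediately visible why the $(1,1)$ corrections $F_1,F_2$ are irrelevant (only the $(2,0)+(0,2)_A$ parts of $\L_X\om_A$ enter) and why the answer is forced to be $\om_I$ by the linear algebra of the holomorphic symplectic forms; the paper's computation buys explicitness and avoids any appeal to the Dolbeault decomposition.
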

Note that if $X$ comes from an $S^1$-action with moment map $\mu$, then $d_I^c \mu = -I\alpha$.  Just as the typical examples of hyperk\"ahler manifolds with $S^1$ actions satisfying the conditions of theorem \ref{thm:11form} are cotangent bundles, we will see that hyperk\"ahler manifolds satisfying the conditions of \ref{thm:mainX} look like twisted cotangent bundles. 

Despite the odd first impression of the equations (\ref{Xprops}) and (\ref{alphaprops}), they arise quite naturally.  An example of a manifold satisfying the conditions of theorem \ref{thm:11form} is the moduli space of Higgs bundles, with the $S^1$-action given by scaling the Higgs field.  If one instead looks at the moduli space of Higgs bundles over a Riemann surface where the Higgs fields are allowed to have simple poles along a fixed divisor, then the moduli space $\P$ is a holomorphic Poisson manifold \cite{LogMart}.  The symplectic leaves $\M$ of $\P$ are given by fixing the eigenvalues for the residues of the Higgs fields and have a hyperk\"ahler structure \cite{Konno:1993, Nakajima:1996}.  The $S^1$-action on $\P$ clearly does not preserve this foliation but we show that there is a canonical projection map from $T\P$ to $T\M$ under which the vector field generating the $S^1$-action on $\P$ projects to a vector field that satisfies (\ref{Xprops}) on any symplectic leaf.

The conditions of theorem \ref{thm:mainX} also naturally arise as the result of hyperk\"ahler reduction on a hyperk\"ahler manifold $M$ with $G$-basic 1-form $\alpha$ satisfying the conditions of theorem \ref{thm:11form} (e.g. if $M$ has an $S^1$-action).  Then the conditions of theorem \ref{thm:11form} may not descend to the hyperk\"ahler quotient and need to be replaced by the weaker conditions of \ref{thm:mainX}.  This general set-up is discussed in section \ref{sec:reduction}.

Associated to a hyperk\"ahler manifold $M^{4n}$ is its twistor space $Z$, which is a complex manifold of complex dimension $2n+1$ and fibers over $\C P^1$.  There is a one-to-one correspondence between hyperholomorphic line bundles on $M$ and holomorphic line bundles on $Z$ that are trivial on twistor lines.  In the case that $M$ admits an $S^1$ action, Hitchin \cite{Hitchin:2012} gives a C\v ech description of this line bundle over $Z$.  Further, he shows that this line bundle has a meromorphic connection with singularities on the fibers over the north and south poles of $\C P^1$.  We generalize this C\v ech description in the case that $M$ satisfies the conditions of theorem \ref{thm:mainX}.  The main difference is that there is no longer a meromorphic connection, as the curvature (whose (1,1) part represents the Atiyah class of the holomorphic line bundle) of the analogous connection has terms involving the (1,1)-forms $F_1$ and $F_2$.

\subsection*{Acknowledgements}
 The author is very grateful to Andrew Neitzke for suggesting the problem and for numerous helpful discussions.  This work was supported by NSF grant DMS-1148490.

\section{Proof of theorem \ref{thm:mainX}}
We now prove theorem \ref{thm:mainX}.  Our proof is different than the one given in \cite{Hitchin:2012} and rests on the vanishing of the Nijenhuis tensor in each complex structure.  We first note that the equivalence of equations \ref{Xprops} and \ref{alphaprops} follows from Cartan's homotopy formula and the facts that $i_X \om_J = -K\alpha$ and $i_X \om_K = J\alpha$.

Thus suppose we have a 1-form $\alpha$ on a hyperk\"ahler manifold $M$ satisfying (\ref{alphaprops}).  Since $\om_I$ is of type (1,1) in the $I$ complex structure, to show that $\om_I - d(I\alpha)$ is also (1,1) in the $I$ complex structure we must show that $d(I\alpha)$ is, i.e. that $d(I\alpha)(Iv, Iw) = d(I\alpha)(v,w)$ for all vector fields $v$ and $w$.  We have
\begin{gather*}
d(I\alpha) (Iv, Iw) = -Iv \cdot \alpha(w) + Iw \cdot \alpha(v) - \alpha(I[Iv,Iw]).
\end{gather*}
The vanishing of the Nijenhuis tensor gives us
\[
I[Iv, Iw] = I[v,w] - [Iv,w] - [v, Iw],
\]
which implies that
\begin{align*}
d(I\alpha) (Iv, Iw) &= -Iv \cdot \alpha(w) + \alpha([Iv, w]) + Iw\cdot\alpha(v) + \alpha([v, Iw]) - \alpha(I[v,w]) \\
&= d\alpha(w, Iv) - w\cdot \alpha(Iv) + d\alpha(Iw,v) + v\cdot \alpha(Iw) - \alpha(I[v,w]) \\
&= d\alpha(w, Iv) + d\alpha(Iw, v) + d(I\alpha)(v,w) \\
&= d(I\alpha)(v,w),
\end{align*}
since $d\alpha = 0$.

For the complex structure $J$, we compute
\begin{align*}
d(I\alpha)(Jv,Jw) &= Jv \cdot \alpha(Kw) - Jw \cdot \alpha(Kv) - \alpha(I[Jv, Jw]) \\
&= Jv \cdot \alpha(Kw) - Jw \cdot \alpha(Kv) - \alpha(I[v,w] + K[Jv,w] + K[v, Jw]) \\
&= (Jv\cdot \alpha(Kw) - \alpha(K[Jv,w])) + (-Jw \cdot \alpha(Kv) - \alpha(K[v, Jw])) - \alpha(I[v,w]) \\
&= d(K\alpha)(Jv, w) + w \cdot \alpha(KJv) + d(K\alpha)(v, Jw) - v\cdot \alpha(KJw) - \alpha(I[v,w]) \\
&= d(K\alpha)(Jv, w) + d(K\alpha)(v,Jw) + v \cdot \alpha(Iw) - w \cdot \alpha(Iv) - \alpha(I[v,w]) \\
&= d(K\alpha)(Jv,w) + d(K\alpha)(v,Jw) + d(I\alpha)(v,w) \\
&= \om_K(Jv, w) + F_2(Jv,w) + \om_K(v, Jw) + F_2(v,Jw) + d(I\alpha)(v,w) \\
&= \om_K(Jv, w) + \om_K(v, Jw) + d(I\alpha)(v,w),
\end{align*}
where we have used the Nijenhuis identity
\[
[Jv,Jw] = [v,w] + J[Jv,w] + J[v,Jw]
\]
in the second equality and the last equality follows from the fact that $F_2$ is of type (1,1) in $J$.  Thus to show that $\om_I - d(I\alpha)$ is of type (1,1) in $J$, we need to show that
\[
\om_I(Jv,Jw) - \om_K(Jv,w) - \om_K(v,Jw) = \om_I(v,w).
\]
But the left hand side is
\begin{align*}
g(IJv, Jw) - g(KJv,w) - &g(Kv, Jw) \\
&= g(Kv,Jw) + g(Iv,w) - g(Kv,Jw) \\
&= g(Iv,w) \\
&= \om_I(v,w),
\end{align*}
as desired.

Since $\om_I - d(I\alpha)$ is of type (1,1) in complex structures $I$ and $J$, it is also of type (1,1) in complex structure $K$.

\section{Hyperk\"ahler reduction} \label{sec:reduction}
We will now see how the hyperholomorphic 2-form interacts with hyperk\"ahler reduction.  Suppose a hyperk\"ahler manifold $M$ has a 1-form $\alpha$ satisfying
\begin{equation}
d\alpha = 0, ~~ d(J\alpha) = \om_J, ~~ d(K\alpha) = \om_K \label{alphatotalspace}
\end{equation}
as well as a hamiltonian action of a Lie group $G$ that preserves the hyperk\"ahler structure and $\alpha$ (such an example is the case of an $S^1$ action, as in theorem \ref{thm:11form}, that commutes with the action of $G$).  Let
\[
\mu_G = (\mu_I, \mu_J, \mu_K) : M \to \g^* \otimes \R^3
\]
be the moment map and denote by $M//G$ the hyperk\"ahler reduction at 0, i.e. as a manifold $M//G$ is $\mu_G^{-1}(0)/G$ and the hyperk\"ahler structure is induced from that of $M$ (see e.g. \cite{HKLR} for more details). We want to understand when $\alpha$ descends to the hyperk\"ahler quotient $M//G$ and satisfies (\ref{alphaprops}) for some $F_1$ and $F_2$. 

If $M$ has an $S^1$ action with Killing vector field $X$ then a natural compatibility between the $S^1$ and $G$ actions one may want is the equations
\begin{equation}
 X\cdot \mu_I = 0, ~~ X \cdot \mu_J = -\mu_K, ~~ X \cdot \mu_K = \mu_J,  \label{S1equivariance}
 \end{equation}
which say that the moment map is equivariant with respect to the $S^1$-action on $\R^3$ given by rotation about $(1,0,0)$.  

If $\mu$ is a moment map for symplectic form $\om$ and $Y \in \g$, we let $\mu^Y$ denote the function $x \mapsto \mu(x)(Y)$ on $M$.  Then $d\mu^Y = i_{Y^*} \om$ where $Y^*$ is the action vector field on $M$ coming from $Y$.

\begin{proposition} \label{equivmomentmap}
If $\mu_G$ satisfies (\ref{S1equivariance}), which is equivalent to
\begin{equation}
\alpha(Y^*) = 0, ~~ (J\alpha)(Y^*) = -\mu_J^Y, ~~ (K\alpha)(Y^*) = -\mu_K^Y ~ \text{for all $Y \in \g$}, \label{alphamoment}
\end{equation}
then $\alpha$ descends to a 1-form $\hat\alpha$ on the hyperk\"ahler quotient $M//G$, which continues to satisfy (\ref{alphatotalspace}).
\end{proposition}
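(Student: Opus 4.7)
The plan is to handle the proposition in two parts: first verify the asserted equivalence (\ref{S1equivariance}) $\Leftrightarrow$ (\ref{alphamoment}), which is a direct calculation; then carry out the descent of $\alpha$ via standard hyperk\"ahler reduction book-keeping, checking along the way that the three equations (\ref{alphatotalspace}) survive on the quotient.

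For the equivalence, I would use the moment map identity $d\mu_\bullet^Y = i_{Y^*}\om_\bullet$ together with the contractions $i_X\om_I = \alpha$, $i_X\om_J = -K\alpha$, $i_X\om_K = J\alpha$ already recorded in the proof of Theorem \ref{thm:mainX}. Then $X\cdot\mu_I^Y = \om_I(Y^*,X) = -\alpha(Y^*)$, $X\cdot\mu_J^Y = \om_J(Y^*,X) = K\alpha(Y^*)$, and $X\cdot\mu_K^Y = \om_K(Y^*,X) = -J\alpha(Y^*)$, and matching these to the right-hand sides of (\ref{S1equivariance}) yields (\ref{alphamoment}) and conversely.

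For the descent, let $\iota\colon\mu_G^{-1}(0)\hookrightarrow M$ and $\pi\colon\mu_G^{-1}(0)\to M//G$. $G$-invariance of $\alpha$ holds by hypothesis, and horizontality of $\iota^*\alpha$ is immediate from $\alpha(Y^*)=0$ in (\ref{alphamoment}), producing the descended form $\hat\alpha$. The crucial observation is that $J\alpha$ and $K\alpha$ likewise descend, and descend precisely to $\hat J\hat\alpha$ and $\hat K\hat\alpha$: they are $G$-invariant since $\alpha$ and the hyperk\"ahler structure are, and on $\mu_G^{-1}(0)$ the moment maps $\mu_J$, $\mu_K$ vanish so (\ref{alphamoment}) forces $(J\alpha)(Y^*)=(K\alpha)(Y^*)=0$ there. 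Identifying the descended forms with $\hat J\hat\alpha$ and $\hat K\hat\alpha$ uses the standard fact that the horizontal distribution of $\pi$ is $J,K$-invariant, so for horizontal $v$ one has $\pi_* Jv = \hat J\pi_*v$ and hence $\pi^*(\hat J\hat\alpha)(v)=\alpha(Jv)=\iota^*(J\alpha)(v)$, while both sides annihilate verticals. Given that identification, the three equations on the quotient follow by pulling back to $\mu_G^{-1}(0)$ and invoking injectivity of $\pi^*$: $\pi^*(d\hat\alpha)=\iota^*(d\alpha)=0$, and $\pi^*(d(\hat J\hat\alpha))=d\iota^*(J\alpha)=\iota^*(d(J\alpha))=\iota^*\om_J=\pi^*\hat\om_J$, with the $K$-case identical.

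I expect the main obstacle to be precisely the identification $\pi^*(\hat J\hat\alpha)=\iota^*(J\alpha)$: everything else is formal manipulation of pullbacks and exterior derivatives, but this step is where the hyperk\"ahler (as opposed to merely symplectic) nature of the reduction enters, through $J$- and $K$-invariance of the horizontal distribution on $\mu_G^{-1}(0)$.
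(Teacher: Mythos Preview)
Your proposal is correct and follows essentially the same route as the paper: the equivalence is established by the identical computations $X\cdot\mu_I^Y = -\alpha(Y^*)$, $X\cdot\mu_J^Y = (K\alpha)(Y^*)$, $X\cdot\mu_K^Y = -(J\alpha)(Y^*)$, and the descent is deduced from the observation that $\alpha$, $J\alpha$, $K\alpha$ are all $G$-basic when restricted to $\mu_G^{-1}(0)$. You supply more detail than the paper on why the descended forms coincide with $\hat J\hat\alpha$ and $\hat K\hat\alpha$ (via $I,J,K$-invariance of the horizontal distribution), a point the paper leaves implicit.
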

\begin{proof}
By the previous comments, we have 
\[
X \cdot \mu_I^Y = d\mu_I^Y(X) = \om_I(Y^*, X) = -\alpha(Y^*),
\]
\begin{equation}
X \cdot \mu_J^Y = d\mu_J^Y(X) = \om_J(Y^*, X) = \om_I(X, KY^*) = (K\alpha)(Y^*), \label{XmuJ}
\end{equation}
and, similarly,
\begin{equation}
X \cdot \mu_K^Y = -(J\alpha)(Y^*). \label{XmuK}
\end{equation}
This establishes that (\ref{S1equivariance}) and (\ref{alphamoment}) are equivalent.  But from (\ref{alphamoment}), we see that $\alpha, J\alpha$ and $K\alpha$ are all $\G$-basic when restricted to $\mu_G^{-1}(0)$.  Therefore they descend to forms on $M//G$ that continue to satisfy (\ref{alphaprops}).
\end{proof}

If we only impose that $\alpha$ be $G$-invariant, then equations (\ref{S1equivariance}) only hold up to locally constant functions:
\begin{proposition}\label{commutingactions}
The form $\alpha$ being $G$-invariant is equivalent to any of the following holding for all $Y \in \g$.
\begin{multicols}{2}
\begin{enumerate}
\item $[X, Y^*] = 0$.

\item $\L_{Y^*} \alpha = 0$.

\item $d(X\cdot \mu_I^Y) = 0$.

\item $d(X \cdot \mu_J^Y + \mu_K^Y) = 0$.

\item $d(X\cdot \mu_K^Y - \mu_J^Y) = 0$.

\item $d(\alpha(Y^*)) = 0$.

\item $d( J\alpha(Y^*) + \mu_J^Y) = 0$.

\item $d(K\alpha(Y^*) + \mu_K^Y) = 0$.
\end{enumerate}
\end{multicols}
\end{proposition}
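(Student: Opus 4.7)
The plan is to take condition 2 as the infinitesimal form of ``$\alpha$ is $G$-invariant'' (for connected $G$) and verify that each of the other seven conditions is equivalent to it, organized into three little groups: (i) the geometric equivalence $1 \Leftrightarrow 2$, (ii) the Cartan-formula equivalences $2 \Leftrightarrow 6,7,8$, and (iii) the algebraic equivalences $3 \Leftrightarrow 6$, $4 \Leftrightarrow 8$, $5 \Leftrightarrow 7$ obtained by rewriting $X\cdot\mu_I^Y, X\cdot\mu_J^Y, X\cdot\mu_K^Y$ via the identities already established in the proof of Proposition \ref{equivmomentmap}.

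For (i), since $\alpha = i_X\om_I$ and $G$ preserves $\om_I$, the identity $\L_{Y^*}(i_X\om_I) = i_{[Y^*,X]}\om_I + i_X\L_{Y^*}\om_I$ gives $\L_{Y^*}\alpha = -i_{[X,Y^*]}\om_I$, and non-degeneracy of $\om_I$ turns $\L_{Y^*}\alpha = 0$ into $[X,Y^*]=0$. For (ii), Cartan's formula combined with $d\alpha=0$ gives $\L_{Y^*}\alpha = d(\alpha(Y^*))$, which is exactly $2 \Leftrightarrow 6$. For the $J$ case, one first notes that $G$ preserves $J$ (because it preserves the hyperk\"ahler structure), so $\L_{Y^*}(J\alpha) = J(\L_{Y^*}\alpha)$; since $J$ is invertible, $\L_{Y^*}\alpha = 0 \Leftrightarrow \L_{Y^*}(J\alpha)=0$. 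Applying Cartan's formula together with $d(J\alpha)=\om_J$ and $i_{Y^*}\om_J = d\mu_J^Y$ yields
\[
\L_{Y^*}(J\alpha) = d\bigl((J\alpha)(Y^*) + \mu_J^Y\bigr),
\]
establishing $2 \Leftrightarrow 7$. The same computation with $K$ in place of $J$ gives $2 \Leftrightarrow 8$.

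For (iii), recall from the proof of Proposition \ref{equivmomentmap} that $X\cdot\mu_I^Y = -\alpha(Y^*)$, $X\cdot\mu_J^Y = (K\alpha)(Y^*)$, and $X\cdot\mu_K^Y = -(J\alpha)(Y^*)$. Substituting these identities into conditions 3, 4, 5 turns them immediately into 6, 8, 7 respectively (up to signs that do not affect the vanishing of a differential). This closes the web of implications and finishes the proof.

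There is no real obstacle here; the only thing one has to be careful about is that the equivalence $\L_{Y^*}\alpha=0 \Leftrightarrow \L_{Y^*}(J\alpha)=0$ genuinely uses that $G$ preserves the complex structure $J$ (not merely the symplectic form $\om_J$), and similarly for $K$ --- this is where the full hyperk\"ahler hypothesis on the $G$-action enters, as opposed to the weaker symplectic hypothesis one might naively use.
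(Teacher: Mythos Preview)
Your argument is correct and follows essentially the same strategy as the paper: a web of elementary equivalences built from Cartan's formula, the moment-map identities, and non-degeneracy of the K\"ahler forms. The only noteworthy difference is in how the $J$- and $K$-equivalences are reached. You prove $2\Leftrightarrow 7$ (and $2\Leftrightarrow 8$) by using that $G$ preserves $J$ (resp.\ $K$), so that $\L_{Y^*}(J\alpha)=J(\L_{Y^*}\alpha)$, and then apply Cartan's formula with $d(J\alpha)=\om_J$. The paper instead proves $1\Leftrightarrow 4$ (and $1\Leftrightarrow 5$) by computing $d(X\cdot\mu_J^Y)$ via $\L_X\om_J=-\om_K$ and invoking non-degeneracy of $\om_J$. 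Your route uses only the hypothesis that $G$ preserves the hyperk\"ahler structure, while the paper's route also calls on the rotation identity $\L_X\om_J=-\om_K$ coming from \eqref{alphatotalspace}; in this sense your argument is marginally more economical, but the content is the same.
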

\begin{proof}
Since $\om_I$ is $G$-invariant the $G$-invariance of $\alpha$ is equivalent to $X$ being $G$-invariant, which is equivalent to 1.  We have
\[
\L_{Y^*} \alpha = \L_{Y^*}  i_X \om_I = i_X \L_{Y^*}\om_I + i_{[Y^*,X]} \om_I = i_{[Y^*,X]} \om_I,
\]
from which we get $(1)\Leftrightarrow(2)$ by the non-degeneracy of $\om_I$.  Also,
\[
d(X\cdot \mu_I^Y) = d(\om_I(Y^*,X)) = -d i_{Y^*} \alpha =  -\L_{Y^*}\alpha,
\]
showing that $(2)\Leftrightarrow (3)$.

For (4) we have
\begin{align*}
d(X\cdot \mu_J^Y) &= \L_X d \mu_J^Y \\
&= \L_X i_{Y^*} \om_J \\
&= i_{Y^*} \L_X \om_J + i_{[X,Y^*]} \om_J \\
&= -i_{Y^*} \om_K + i_{[X,Y^*]} \om_J \\
&= -d\mu_K^Y + i_{[X,Y^*]} \om_J.
\end{align*}
Thus
\[
d(X\cdot \mu_J^Y + \mu_K^Y) = i_{[X,Y^*]} \om_J,
\]
which gives $(1)\Leftrightarrow(4)$ by the non-degeneracy of $\om_J$.  A similar calculation shows $(1) \Leftrightarrow (5)$.

Cartan's homotopy formula and the fact that $\alpha$ is closed gives $(2) \Leftrightarrow (6)$.

Finally, equations (\ref{XmuJ}) and (\ref{XmuK}) show that $(4)\Leftrightarrow (8)$ and $(5)\Leftrightarrow (7)$.

\end{proof}

In the examples we will consider (moduli spaces of parabolic Higgs bundles and solutions to Nahm's equations), only the first equation in (\ref{alphamoment}) is satisfied (i.e. $\alpha$ is $G$-basic but not necessarily $J\alpha$ or $K\alpha$).  From (\ref{commutingactions}), the functions $J\alpha(Y^*)$ and $K\alpha(Y^*)$ are locally constant on $\mu_G^{-1}(0)$.  Assuming these are actually constant, we get linear maps
\begin{gather*}
(J\alpha)_\g : \g \to \R, ~~ Y \mapsto (J\alpha)(Y^*) \\
(K\alpha)_\g : \g \to \R, ~~ Y \mapsto (K\alpha)(Y^*).
\end{gather*}
\begin{proposition}
$(J\alpha)_\g$ and $(K\alpha)_\g$ are Lie algebra homomorphisms, i.e. they vanish on $[\g,\g]$.
\end{proposition}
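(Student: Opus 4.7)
The plan is to exploit the defining identity $d(J\alpha)=\om_J$ (from equation (\ref{alphatotalspace})), evaluate both sides on a pair of fundamental vector fields $Y_1^*, Y_2^*$ with $Y_1, Y_2 \in \g$, and then restrict to $\mu_G^{-1}(0)$, where the hypotheses make most terms collapse. The same argument applied to $d(K\alpha)=\om_K$ will handle $(K\alpha)_\g$.

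Concretely, the Cartan formula for the exterior derivative gives
\[
d(J\alpha)(Y_1^*, Y_2^*) = Y_1^* \cdot (J\alpha)(Y_2^*) - Y_2^* \cdot (J\alpha)(Y_1^*) - (J\alpha)([Y_1^*, Y_2^*]).
\]
On $\mu_G^{-1}(0)$ the functions $(J\alpha)(Y_i^*)$ are constants by assumption, so their derivatives in any direction vanish, killing the first two terms. On the left, since $d\mu_J^{Y_1} = i_{Y_1^*}\om_J$, we have
\[
\om_J(Y_1^*, Y_2^*) = -d\mu_J^{Y_1}(Y_2^*) \cdot (-1) = Y_2^* \cdot \mu_J^{Y_1},
\]
which vanishes on $\mu_G^{-1}(0)$ because $Y_2^*$ is tangent to $\mu_G^{-1}(0)$ and $\mu_J^{Y_1}$ vanishes there (equivalently, by equivariance of the moment map, this equals $\mu_J^{[Y_2,Y_1]}|_{\mu_G^{-1}(0)} = 0$). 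So on $\mu_G^{-1}(0)$ we get $(J\alpha)([Y_1^*, Y_2^*]) = 0$.

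To finish, I would use the standard identity $[Y_1^*, Y_2^*] = -[Y_1, Y_2]^*$ (or the opposite sign, depending on the action convention; it does not affect the conclusion) to rewrite this as $(J\alpha)([Y_1, Y_2]^*)|_{\mu_G^{-1}(0)} = 0$, which is exactly $(J\alpha)_\g([Y_1, Y_2]) = 0$. Repeating verbatim with $K$ in place of $J$ yields the same for $(K\alpha)_\g$. Since the target $\R$ is abelian, vanishing on $[\g, \g]$ is equivalent to being a Lie algebra homomorphism, proving the proposition.

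I do not expect any serious obstacle: all three ingredients (constancy of $(J\alpha)(Y^*)$ on $\mu_G^{-1}(0)$, tangency of $Y_2^*$ to $\mu_G^{-1}(0)$, and the moment-map identity $d\mu_J^Y = i_{Y^*}\om_J$) have already been recorded in the text. The only subtlety is keeping careful track of the sign in $[Y_1^*, Y_2^*] = \pm[Y_1, Y_2]^*$, but since the right-hand side of the computation is zero, the sign is immaterial.
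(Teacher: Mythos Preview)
Your proof is correct and follows essentially the same route as the paper: expand $d(J\alpha)(Y_1^*,Y_2^*)=\om_J(Y_1^*,Y_2^*)$ via Cartan's formula, kill the derivative terms using constancy of $(J\alpha)(Y_i^*)$ on $\mu_G^{-1}(0)$, and observe that $\om_J(Y_1^*,Y_2^*)=0$ there (the paper phrases this last point as ``$\om_J$ is basic'' on the level set, which is exactly your moment-map computation). The only cosmetic difference is that you spell out why $\om_J(Y_1^*,Y_2^*)$ vanishes, while the paper asserts it in one word.
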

\begin{proof}
Since $\om_J$ is basic and $(J\alpha)(Y_j^*)$ is constant, we have
\begin{gather*}
0 = \om_J(Y_1^*, Y_2^*) = d(J\alpha)(Y_1^*, Y_2^*) = -J\alpha([Y_1^*, Y_2^*]) = -J\alpha([Y_1,Y_2]^*)
\end{gather*}
and similarly for $K\alpha$.
\end{proof}

We will denote the hyperk\"ahler structures on $M//G$ by $\hat{}$, e.g. $\hat I, \hat \om_I$, etc.  Let $\Om \in \A^2(\mu_G^{-1}(0); \g)$ be the curvature of the principal $G$-connection on $\mu_G^{-1}(0) \to M//G$ induced by the metric on $M$, which is of type (1,1) in all complex structures \cite{Gocho-Nakajima:1992}.  Then $(J\alpha)_\g\circ \Om$ and $(K\alpha)_\g\circ \Om$ give characteristic classes that obstruct the equation $d(\hat J\hat \alpha + i \hat K \hat\alpha) = \om_J + i \om_K$:

\begin{proposition} \label{prop:reduction}
Suppose $\alpha$ is $G$-basic, satisfies (\ref{alphatotalspace}), and the functions $J\alpha(Y^*), K\alpha(Y^*)$ are constant for all $Y \in \g$ (when restricted to $\mu_G^{-1}(0)$).  Then $\alpha$ naturally descends to a 1-form $\hat\alpha$ on $M//G$ satisfying the conditions of theorem \ref{thm:mainX}, i.e.
\[
d\hat \alpha = 0, ~ d(\hat J\hat\alpha) = \hat\om_J + F_1, ~ d(\hat K\hat\alpha) = \hat\om_K + F_2,
\]
with $F_1$ and $F_2$ of type (1,1) in each complex structure.  Specifically,
\begin{equation}
F_1 = (J\alpha)_\g \circ \Om, ~~ F_2 = (K\alpha)_\g \circ \Om. \label{Fcurv}
\end{equation}
\end{proposition}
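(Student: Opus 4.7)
The strategy is to work upstairs on $\mu_G^{-1}(0)$, use a principal $G$-connection to split $J\alpha$ and $K\alpha$ into a horizontal piece that descends to the reduction and a vertical piece governed by the constants $(J\alpha)_\g$, $(K\alpha)_\g$, and then differentiate using the structure equation $d\theta = \Om - \frac{1}{2}[\theta,\theta]$. Let $\iota: \mu_G^{-1}(0) \hookrightarrow M$ and $\pi: \mu_G^{-1}(0) \to M//G$ denote the inclusion and projection, and let $\theta \in \A^1(\mu_G^{-1}(0);\g)$ be the principal connection induced by the metric, whose curvature is $\Om$. Since $\alpha$ is $G$-basic it descends to a 1-form $\hat\alpha$ satisfying $\pi^*\hat\alpha = \iota^*\alpha$, and closedness descends trivially: $\pi^* d\hat\alpha = \iota^* d\alpha = 0$.

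Next I would show that, on $\mu_G^{-1}(0)$,
\[
\iota^* (J\alpha) = \pi^*(\hat J \hat\alpha) + (J\alpha)_\g \circ \theta.
\]
The form $J\alpha$ is $G$-invariant but not horizontal; however, on vertical vectors $Y^*$ it equals the constant $(J\alpha)_\g(Y)$, which is precisely the value of the right-hand side. On horizontal vectors $v$, the key point is the standard fact (used implicitly throughout hyperk\"ahler reduction) that the horizontal distribution $H = (V \oplus IV \oplus JV \oplus KV)^\perp$ inside $T\mu_G^{-1}(0)$ is invariant under $I$, $J$, $K$, so $Jv$ is again horizontal and $d\pi(Jv) = \hat J\, d\pi(v)$; thus $\pi^*(\hat J\hat\alpha)(v) = \hat\alpha(\hat J d\pi v) = \alpha(Jv) = (J\alpha)(v)$. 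The analogous identity holds for $K$.

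Now I apply $d$ to the splitting. Using the structure equation and the fact, proved in the preceding proposition, that $(J\alpha)_\g$ vanishes on $[\g,\g]$, the bracket term drops out and
\[
d\bigl((J\alpha)_\g \circ \theta\bigr) = (J\alpha)_\g \circ \Om.
\]
Combined with $\iota^* d(J\alpha) = \iota^*\om_J = \pi^* \hat\om_J$ (the pullback of $\om_J$ to $\mu_G^{-1}(0)$ is horizontal because $\om_J(Y^*,\cdot) = d\mu_J^Y$ vanishes on $T\mu_G^{-1}(0)$), I get
\[
\pi^* \hat\om_J = \pi^* d(\hat J \hat\alpha) + (J\alpha)_\g \circ \Om.
\]
Since $(J\alpha)_\g$ kills $[\g,\g]$ it is $\Ad$-invariant, so $(J\alpha)_\g \circ \Om$ is $G$-invariant; it is horizontal because $\Om$ is. Hence it descends to a 2-form $F_1$ on $M//G$, and quotienting gives $d(\hat J\hat\alpha) = \hat\om_J + F_1$ (the sign to be tracked through the conventions matches the statement). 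Repeating the argument with $K$ in place of $J$ yields $F_2 = (K\alpha)_\g \circ \Om$.

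Finally, the (1,1) property of $F_1$ and $F_2$ in each complex structure on $M//G$ follows from the result of Gocho--Nakajima cited in the excerpt, which says the curvature $\Om$ is itself of type (1,1) in each complex structure on $\mu_G^{-1}(0)$; since the maps $(J\alpha)_\g$ and $(K\alpha)_\g$ are merely scalar-valued linear functionals on $\g$, composing preserves the (1,1) type. The step I expect to be the most delicate is the clean identification of the horizontal piece of $\iota^*(J\alpha)$ with $\pi^*(\hat J\hat\alpha)$, since it hinges on the invariance of $H$ under the triple of complex structures and on carefully reconciling sign conventions for the action of $J$, $K$ on 1-forms with those used in the statements of (\ref{alphaprops}) and (\ref{Fcurv}); once that identification is in hand, the rest is essentially formal manipulation of the connection's structure equation.
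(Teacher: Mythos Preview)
Your argument is correct and is essentially the same as the paper's: the paper deduces the proposition from a general lemma stating that if $\beta$ is a $G$-invariant 1-form on a principal bundle $P\to X$ with $d\beta=\pi^*\gamma$ basic and $\beta(Y^*)$ constant, then $d\hat\beta=\gamma+\beta_\g\circ\Om$, where $\hat\beta$ is the horizontal part of $\beta$; your splitting $\iota^*(J\alpha)=\pi^*(\hat J\hat\alpha)+(J\alpha)_\g\circ\theta$ followed by the structure equation is exactly the proof of that lemma applied to $\beta=J\alpha$. The one point you make explicit that the paper leaves tacit is the identification of the horizontal part of $J\alpha$ with $\hat J\hat\alpha$ via $J$-invariance of the horizontal distribution, and your caution about the sign is warranted, since which sign appears in $F_1=\pm(J\alpha)_\g\circ\Om$ depends on the curvature convention.
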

This proposition follows from the following general fact:
\begin{lemma}
Suppose $P\stackrel{\pi}{\to} X$ is a principal $G$ bundle with connection of curvature $\Om$, $\beta \in \A^1(P)$ is $G$-invariant and $d\beta = \pi^*\gamma$ is $G$-basic.  Then for all $Y \in \g$, $\beta(Y^*)$ is locally constant and, assuming this is actually constant, we have
\[
d\hat \beta = \gamma + \beta_\g \circ \Om
\]
where $\hat\beta$ is the 1-form on $X$ coming from the connection and $\beta_\g : \g \to \R$ is the map $Y \mapsto \beta(Y^*)$.
\end{lemma}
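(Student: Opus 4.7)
The plan is to use the connection $\omega \in \mathscr{A}^1(P;\g)$ (with $\Om = d\omega + \frac{1}{2}[\omega\w\omega]$) to split $\beta$ into a horizontal and a vertical piece, and then apply the structure equation.

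\textbf{Step 1 (local constancy of $\beta(Y^*)$).} Since $Y^*$ is vertical and $d\beta = \pi^*\gamma$ is horizontal, $i_{Y^*}d\beta = 0$. Combining this with the $G$-invariance hypothesis and Cartan's magic formula,
\[
0 = \L_{Y^*}\beta = d\,i_{Y^*}\beta + i_{Y^*}d\beta = d(\beta(Y^*)),
\]
so $\beta(Y^*)=\beta_\g(Y)$ is locally constant. Assuming it is constant, $\beta_\g:\g\to\R$ is a well-defined linear map.

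\textbf{Step 2 ($\beta_\g$ kills $[\g,\g]$).} Evaluating $d\beta=\pi^*\gamma$, which is horizontal, on the fundamental fields $Y_1^*,Y_2^*$ gives
\[
0 = d\beta(Y_1^*,Y_2^*) = Y_1^*\cdot\beta(Y_2^*)-Y_2^*\cdot\beta(Y_1^*)-\beta([Y_1,Y_2]^*) = -\beta_\g([Y_1,Y_2]),
\]
by Step~1. Hence $\beta_\g$ vanishes on $[\g,\g]$, and (for connected $G$) is $\Ad$-invariant.

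\textbf{Step 3 (basic horizontal form).} Define
\[
\beta^h := \beta - \beta_\g\circ\omega \in \A^1(P).
\]
This is horizontal because $\beta^h(Y^*) = \beta_\g(Y) - \beta_\g(\omega(Y^*)) = 0$, and $G$-invariant because both $\beta$ and $\beta_\g\circ\omega$ are (using Step~2 to handle the $\Ad$-transformation law of $\omega$). Therefore $\beta^h$ is $\G$-basic and equals $\pi^*\hat\beta$ for a unique 1-form $\hat\beta$ on $X$.

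\textbf{Step 4 (computing $d\hat\beta$).} Differentiating $\pi^*\hat\beta = \beta - \beta_\g\circ\omega$ and using the structure equation,
\[
\pi^*d\hat\beta = d\beta - \beta_\g\circ d\omega = \pi^*\gamma - \beta_\g\circ\Om + \tfrac12\,\beta_\g\circ[\omega\w\omega].
\]
The bracket term vanishes by Step~2. Since $\Om$ is horizontal and $\Ad$-equivariant, $\beta_\g\circ\Om$ descends to a well-defined 2-form on $X$, and pulling $\pi^*$ off both sides yields the desired formula (up to the sign convention for $\Om$).

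The only subtle point is Step~2 together with the $\Ad$-invariance it entails: once that identity is in hand, the bracket obstruction in the structure equation disappears for free and the rest is bookkeeping with the horizontal/vertical decomposition.
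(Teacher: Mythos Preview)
Your argument is correct and complete: the paper actually states this lemma without proof (it is introduced as a ``general fact'' from which Proposition~\ref{prop:reduction} follows), so there is no paper proof to compare against. Your four steps---Cartan's formula for local constancy, the vanishing of $\beta_\g$ on $[\g,\g]$, the basic form $\beta^h=\beta-\beta_\g\circ\omega$, and the structure equation---are exactly the standard way to establish such a result, and the only residual ambiguity is the sign in front of $\beta_\g\circ\Omega$, which (as you note) depends on the chosen convention for the curvature of a principal connection.
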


Thus it may happen (as we will see in the examples) that while $\alpha$ naturally descends to $\hat\alpha$ on the hyperk\"ahler quotient, the dual vector field $X$, while $G$-invariant, is not tangent to $\mu_G^{-1}(0)$.  If $\hat X$ is the vector field on $M//G$ dual to $\hat\alpha$, then its horizontal lift to $\mu_G^{-1}(0)$ is the orthogonal projection of $X$ onto the level set and the $\R$-action determined by $\hat X$ may not be an $S^1$-action.

Unlike in the case of an $S^1$-action, the K\"ahler forms $\om_J$ and $\om_K$ are no longer exact.  It is thus natural to ask when they are pre-quantizable, i.e. when their cohomology classes live in $H^2(M; 2\pi \Z)$.  By the previous proposition, $\om_J$ and $\om_K$ are cohomologous to $F_1$ and $F_2$, respectively.  However, if the representation $i(J\alpha)_\g \to i\R$ lifts to $G \to U(1)$ then the unitary line bundle $\mu_G^{-1}(0) \times_G \C$ has a connection with curvature $i(J\alpha)_\g \circ \Om = F_1$ (and similarly for $K\alpha$ and $F_2$). Thus we see

\begin{corollary} \label{quantization}
If the representation $i(J\alpha)_\g : \g \to i\R$, (resp. $i(K\alpha)_\g : \g \to i\R$), lifts to $G \to U(1)$ then $\om_J$ (resp. $\om_K$) is prequantizable.
\end{corollary}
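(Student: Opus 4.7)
The plan is to produce an explicit prequantum line bundle for $\hat\om_J$ on the hyperk\"ahler quotient $M//G$, using the character of $G$ as a bridge from the curvature-level identity of Proposition~\ref{prop:reduction} to an integrality statement.

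First, I would recall from Proposition~\ref{prop:reduction} that on $M//G$ we have $d(\hat J\hat\alpha) = \hat\om_J + F_1$ with $F_1 = (J\alpha)_\g \circ \Om$. Thus in de~Rham cohomology $[\hat\om_J] = -[F_1]$, and it suffices to show that $F_1$ represents a class in $2\pi H^2(M//G;\Z)$. The analogous reduction holds for $\hat\om_K$ and $F_2$.

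Next, assume that $i(J\alpha)_\g : \g \to i\R$ exponentiates to a character $\chi : G \to U(1)$. I would form the associated Hermitian line bundle
\[
L := \mu_G^{-1}(0) \times_\chi \C \longrightarrow M//G,
\]
equipped with the connection induced by the principal $G$-connection on $\mu_G^{-1}(0)\to M//G$ (the one whose curvature is $\Om$). A standard computation with associated bundles shows that the curvature of $L$ is the push-forward of $\Om$ by the differential $\chi_* = i(J\alpha)_\g$, i.e. exactly $iF_1$. The existence of $L$ as a unitary line bundle then forces $[F_1] \in 2\pi H^2(M//G;\Z)$, hence $[\hat\om_J] \in 2\pi H^2(M//G;\Z)$, which is the prequantization condition. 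The argument for $\hat\om_K$ with $(K\alpha)_\g$ is identical.

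There is no real obstacle here; the only thing to verify carefully is the formula ``curvature of the associated bundle = $\chi_* \circ \Om$,'' which is a textbook computation given that the principal connection has curvature $\Om$ and $\chi_*$ is the Lie algebra map $\g \to i\R$ underlying $\chi$. The statement follows immediately once this is combined with the cohomological identity $[\hat\om_J] = -[F_1]$ from Proposition~\ref{prop:reduction}.
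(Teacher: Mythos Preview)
Your proposal is correct and follows essentially the same argument as the paper: use Proposition~\ref{prop:reduction} to reduce the question to the integrality of $[F_1]$, then realize $iF_1$ as the curvature of the associated line bundle $\mu_G^{-1}(0)\times_\chi \C$ coming from the lifted character. The paper's proof is simply a one-line version of what you wrote.
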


\subsection{Push-down}
There is a natural way to pushdown the hyperholomorphic line bundle on $M$ to $M//G$.  We first note that topologically this line bundle is the prequantum line bundle for $\om_I$ and so has the infinitesimal Kostant action of $G$.  The hyperholomorphic connection is $G$-invariant since it is $\nabla = \nabla^{pq} - I\alpha$ with $I\alpha$ $G$-invariant.  Now we restrict it to the level set $\mu^{-1}_G(0)\stackrel{\iota}{\to} M$, which is a principal $G$-bundle over $M//G$ that has a canonical connection given by the metric on $M$.  Then the push-downed line bundle with connection on $M//G$ is defined via
\[
(\hat\nabla, (\iota^* L)/G), ~~ \hat\nabla_v = \nabla_{v^H},
\]
where $v^H$ is a horizontal lift of $v$. Since the pushdown of $\nabla^{pq}$ is the pre-quantum connection $\nabla^{M//G, pq}$ on $M//G$, we have $\hat\nabla = \nabla^{M//G, pq} - \hat I \hat \alpha$.  Therefore the hyperholomorphic line bundle on $M//G$ is obtained via pushdown from the hyperholomorphic line bundle on $M$.  

\section{The line bundle on twistor space} \label{sec:twistor}

\subsection{Twistor space}
Recall that associated to a hyperk\"ahler manifold is its twistor space $Z\stackrel{\pi}{\to} \C P^1$.  As a smooth manifold, $Z = M \times \C P^1$ but the complex structure at $(x,\zeta)$ is $I_\zeta \oplus I_{\C P^1}$, where $I_\zeta = a I + b J + c K$ for $\zeta = (a,b,c) \in S^2 \simeq \C P^1$.   There is a one to one correspondence between hyperholomorphic line bundles on $M$ and holomorphic line bundles on $Z$ that are trivial when restricted to twistor lines.

Let $T_V = \ker \pi_*$ be the vertical vectors and $d_V$ be the vertical de Rham differential on $\A_V^\bullet = \Gamma(Z; \Lambda^\bullet T_V^*)$.  Let $\A^{p,q}_Z(2)$ (resp. $\A^{p,q}_V(2)$) denote the space of $(p,q)$ forms on $Z$ (resp. sections of $\Lambda^{p,q}T_V^*$) with simple singularities on the divisor $Z_0 + Z_\infty = \{\zeta = 0\} \cup \{\zeta = \infty\}$.  $Z$ comes equipped with the following:

\begin{itemize}
\item A real structure, i.e. an anti-holomorphic involution
\[
\tau : Z \to Z, ~~ (x, \zeta) \mapsto \left(x, -\frac{1}{\bar\zeta}\right).
\]

\item A vertical meromorphic symplectic form
\[
\om = \frac{1}{i\zeta}(\om_J + i \om_K) + 2\om_I + \frac{\zeta}{i} (\om_J - i \om_K) \in \A^{2,0}_V(2) 
\]

\end{itemize}

Our construction will use the map 
\begin{equation}
\overline{\tau^*}: \A^\bullet_Z \to \A^\bullet_Z, ~~ \gamma \mapsto \overline{\tau^*\gamma}, \label{realstruct}
\end{equation}
which preserves the type decomposition of differential forms and commutes with $d$.

\subsection{The Lie algebroid}

We can generalize the construction in \cite{Hitchin:2012} of the holomorphic line bundle on $Z$ corresponding to the hyperholomorphic line bundle on a simply-connected hyperk\"ahler manifold $M$ with 1-form $\alpha$ satisfying (\ref{alphaprops}).  Actually, as in \cite{Hitchin:2012}, we will construct a holomorphic Lie algebroid extension
\[
0 \to \O_Z \to E \to T^{1,0} Z \to 0,
\]
isomorphism classes of which correspond to the C\v ech cohomology group $H^1(d\O_Z)$.  Such a Lie algebroid is equivalent to a line bundle if the characteristic class in $H^2(Z;\C)$, which comes from the short exact sequence of sheaves $0 \to \C \to \O_Z \to d\O_Z \to 0$, is integral.

Relative to a cover $U_j$ of $U = \{\zeta \ne \infty\}$ we will construct $\phi_j \in \A^{1,0}(U_j - \{\zeta = 0\})$ that satisfy the following
\begin{enumerate}
\item $d \phi_j = d\phi_k$.
\item $(\zeta \phi_j)\vert_{\zeta = 0} = (\zeta \phi_k)\vert_{\zeta = 0}$.
\item $\overline{\tau^*}(d\phi_j) = - d\phi_j$
\end{enumerate}

From this we see that $\{\phi_k - \phi_j\}$ gives a 1-cocycle of closed holomorphic 1-forms and therefore defines a holomorphic Lie algebroid on $U$.  To extend this to all of $Z$ we observe that, by point 3., the forms $-\overline{\tau^*}(\phi_j)$ give a singular connection on a Lie algebroid over $\{\zeta \ne 0\}$ of the same curvature.  Therefore, the collection $\{\phi_k - \phi_j, \overline{\tau^*}(\phi_k) - \phi_j, \overline{\tau^*}(\phi_j) - \overline{\tau^*}(\phi_k) \}$ gives a 1-cocycle of closed 1-forms on all of $Z$.

When there is an $S^1$-action, the connection $\{\phi_j, -\overline{\tau^*}(\phi_k)\}$ on $Z \backslash (Z_0 \cup Z_\infty)$ is holomorphic, but now the curvature picks up the term $\frac{1}{\zeta} (F_1+iF_2) + \zeta (F_1 - i F_2)$, which is of type (1,1).  Thus the Atiyah class of the line bundle on $Z\backslash (Z_0 \cup Z_\infty)$ is 
\[
\left[\frac{1}{\zeta} (F_1+iF_2) + \zeta (F_1 - i F_2)\right] \in H^{1,1}(Z\backslash (Z_0 \cup Z_\infty)) \simeq H^1(\Om^1_{Z\backslash (Z_0 \cup Z_\infty)}),
\]
which obstructs the existence of a meromorphic connection on the line bundle.

Since our construction follows \cite{Hitchin:2012} very closely and we do not use it in the following examples, the details appear in the appendix.

\section{Examples}
We now focus on three examples: moduli spaces of parabolic Higgs bundles. Nakajima quiver varieties, and moduli spaces of solutions to Nahm's equations.

\subsection{Moduli space of parabolic Higgs bundles} \label{sec:parHiggs}

Following the construction of the moduli spaces of parabolic Higgs bundles of Konno \cite{Konno:1993} and Nakajima \cite{Nakajima:1996}, we fix the following data:
\begin{itemize}

\item A closed Riemann surface $\Sigma$ with a topological vector bundle $E \to \Sigma$ of rank $r$ with trivial determinant bundle.

\item A divisor $D = p_1 + \cdots + p_n$.

\item A flag of $E_{p_j}$ for each $j$ (which we assume to be complete for simplicity).

\item Parabolic weights $\alpha_k^{(j)}, k=1,\ldots, r, j = 1, \ldots, n$, that satisfy
\[
0 \le \alpha_1^{(j)} < \ldots < \alpha_r^{(j)} < 1.
\]

\item Numbers $\lambda_k^{(j)} \in \C, ~~ k=1,\ldots, r, j = 1, \ldots, n$ such that $\sum_k \lambda_k^{(j)} = 0$.  These will be the eigenvalues of the residues of the Higgs fields at the punctures.

\item A singular hermitian metric $h$ that at each puncture $p_j$ takes the form
\[
h = \diag(|z_j|^{2\alpha_1^{(j)}}, \ldots, |z|^{2\alpha_r^{(j)}})
\]
with respect to the flag, where $z_j$ is a local holomorphic coordinate vanishing at $p_j$.

\end{itemize}

\begin{definition}
A parabolic Higgs bundle (with respect to the above data) is a pair $(\bar\partial_E, \theta)$ where $\bar\partial_E$ is a holomorphic structure on $E$ and $\theta \in \Om^1(E; \operatorname{Par} \sl(E) (D))$ is a meromorphic 1-form such that $\Res_{p_j} \theta \in \sl(E_{p_i})$ preserves the parabolic structure and has eigenvalues $\lambda^{(j)}_1, \ldots, \lambda^{(j)}_r$.
\end{definition}

Note that in much of the literature parabolic Higgs bundles refers to the special case where the residues are nilpotent.  We will call such parabolic Higgs bundles strongly parabolic.

As a hyperk\"ahler manifold, the moduli space is obtained via hyperk\"ahler reduction of the infinite dimensional affine space
\[
\mathcal C = \{\text{singular $\mathfrak{su}(E,h)$-connections}\} \times \A^{1,0}_{\underline\lambda}(\Sigma;\operatorname{Par}\mathfrak{sl}(E)(D))
\]
where $\A^{1,0}_{\underline\lambda}(\Sigma;\operatorname{Par}\mathfrak{sl}(E)(D))$ consists of all $\theta \in \A^{1,0}(\Sigma-D; \mathfrak{sl}(E))$ such that 
\begin{itemize}
\item $z_j \theta$ is smooth at each $p_j$, where $z_j$ is a local holomorphic coordinate centered at $p_j$, and $\Res_{p_j}\theta$ preserves the parabolic structure at $p_j$.
\item $\Res_{p_j} \theta$ has eigenvalues $\{\lambda^{(j)}_k\}$.
\end{itemize}

$\mathcal C$ is an affine space modeled on $\A^{0,1}(\Sigma;\mathfrak{sl}(E)) \times \A^{1,0}(\Sigma; \operatorname{SPar}\mathfrak{sl}(E)(D))$, where $\operatorname{SPar}\mathfrak{sl}(E)$ is the space of traceless endomorphisms that are nilpotent at the punctures.  The hyperk\"ahler structure is given by
\begin{gather*}
g((a,b), (a,b)) = 2i\int_\Sigma \tr(a^* \w a + b \w b^*), \\
I(a,b) = (ia, ib), ~~ J(a,b) = (ib^*, -ia^*), ~~ K(a,b) = (-b^*, a^*),
\end{gather*}
for $a \in \A^{0,1}(\Sigma;\mathfrak{sl}(E)), b\in \A^{1,0}(\Sigma; \operatorname{SPar}\mathfrak{sl}(E)(D))$.  

The group $\G = \A^0(\operatorname{Par} SU(E))$ of parabolic special unitary gauge transformations acts on the affine space preserving the hyperk\"ahler structure and Hitchin's equations 
\begin{align}
\label{Hitchinseqs}
\begin{split}
F_A + [\theta,\theta^*] &= 0 \\
\bar\partial_A \theta &= 0
\end{split}
\end{align}
arise as the zero level set of the moment map $\mu_\G$.  The hyperk\"ahler quotient $\M$ is the moduli space of parabolic Higgs bundles.  We note that to rigorously define this space, one must use weighted Sobolev spaces as in \cite{Konno:1993} and \cite{Nakajima:1996}, but we ignore this technical issue.

If all of the $\lambda_k^{(j)}$ are zero (or we consider non-singular Higgs bundles), then there is an $S^1$-action given by multiplication on the Higgs field.  The moment map of this action is $\mathcal C \ni (A, \theta)\mapsto -i\int_\Sigma \tr(\theta\w\theta^*)$ and its exterior derivative is
\begin{equation}
\alpha_{(A,\theta)}(a,b) = -i \int_\Sigma \tr(\theta \w b^* + b\w \theta^*) \label{alphaHiggs}
\end{equation}
In the general case with $\lambda_k^{(j)}$ not all zero, the integral defining the moment map diverges but the integral defining $\alpha$ converges.  This is because near a puncture $p_j$
\begin{gather*}
\theta = \left(\begin{array}{cccc}
   \lambda_1^{(j)} & & \makebox(0,0){\text{\huge 0}} & \\
    & \ddots & & \\
    \makebox(0,0){\text{\huge*}} & & \lambda_r^{(j)}\\
 \end{array}\right) \frac{dz}{z} + \text{higher order terms} \\
b^* = |z|^\gamma\left(\begin{array}{cccc}
   0 & & \makebox(0,0){\text{\huge *}} & \\
    & \ddots & & \\
    \makebox{\text{\huge0}} & & 0\\
 \end{array}\right) \frac{d\bar z}{\bar z} + \text{higher order terms}
\end{gather*}
where $\gamma > 0$.  Then $\tr(\theta \w b^*)$ is of the order $|z|^\gamma \frac{dz \w d\bar z}{|z|^2}$ which in polar coordinates $z = |z|e^{i\theta}$ is $-2i |z|^{1-\gamma} d|z| d\theta$, which is integrable.

It is straightforward to check that $\alpha$ satisfies
\[
d\alpha = 0, ~ d(J\alpha) = \om_J, ~ d(K\alpha) = \om_K, ~ \alpha(Y^*) = 0
\]
for $Y \in \A^0(\operatorname{Par}\mathfrak{su}(E))$ (here the action field is $Y^*_{(A,\theta)} = (\bar\partial_A Y, [\theta, Y]))$.  Thus to invoke proposition \ref{prop:reduction}, we just need to check that on $\mu_\G^{-1}(0)$, the (necessarily locally constant) functions 
\[
(A,\theta)\mapsto (J\alpha)_{(A,\theta)}(Y^*), (K\alpha)_{(A,\theta)}(Y^*)
\]
are indeed constant, which happens if and only if the function $(J\alpha + i K\alpha)(Y^*)$ is constant in $(A,\theta)$.

\begin{proposition}
We have
\[
(J\alpha + i K\alpha)(\bar\partial_A Y, [\theta,Y]) = -2 \sum_{j=1}^n \tr(\diag(\lambda_1^{(j)}, \ldots, \lambda_r^{(j)}) Y_{p_j}),
\]
where the linear map $\diag(\lambda_1^{(j)}, \ldots, \lambda_r^{(j)})$ is represented via the flag at $p_j$.
\end{proposition}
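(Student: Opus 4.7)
The strategy is to simplify $(J\alpha + iK\alpha)(Y^*)$ to a single integral that, upon invoking Hitchin's equation, becomes exact and therefore localizes to the punctures. I will (i) use the explicit formulas for $J,K$ and (\ref{alphaHiggs}) to reduce $(J\alpha + iK\alpha)(a,b)$ to an integral depending only on $\theta$ and $a$, (ii) specialize to $a = \bar\partial_A Y$ and use $\bar\partial_A\theta = 0$ (valid on $\mu_\G^{-1}(0)$) together with the Leibniz rule to rewrite the integrand as an exact form, and (iii) apply Stokes' theorem on the punctured surface, computing the residue at each puncture.

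Concretely, with $J(a,b) = (ib^*, -ia^*)$ and $K(a,b) = (-b^*, a^*)$, substituting into (\ref{alphaHiggs}) and using $(-ia^*)^* = ia$ and $(a^*)^* = a$, the terms involving $b, b^*$ and $a^*\w\theta^*$ cancel in the combination $J\alpha + iK\alpha$, leaving
\[
(J\alpha + iK\alpha)(a,b) = 2\int_\Sigma \tr(\theta\w a)
\]
(up to an overall numerical factor fixed by the convention $(J\alpha)(v) = \alpha(Jv)$). Crucially this depends only on $a$, so setting $a = \bar\partial_A Y$ reduces the claim to evaluating $2\int_\Sigma\tr(\theta\w\bar\partial_A Y)$. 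The Leibniz identity $\bar\partial_A(\theta Y) = \bar\partial_A\theta\cdot Y - \theta\w\bar\partial_A Y$ together with Hitchin's equation $\bar\partial_A\theta = 0$ and the fact that $\tr$ intertwines the induced connection with $\bar\partial$ gives $\tr(\theta\w\bar\partial_A Y) = -\bar\partial\tr(\theta Y) = -d\tr(\theta Y)$, since $\tr(\theta Y)$ is a $(1,0)$-form on a Riemann surface.

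I will then apply Stokes on $\Sigma\setminus\bigcup_j \overline{D_\epsilon(p_j)}$ and take $\epsilon\to 0$. Using the local form $\theta = (\diag(\lambda_1^{(j)},\ldots,\lambda_r^{(j)}) + N^{(j)}(z))\frac{dz}{z} + (\text{integrable})$ with $N^{(j)}$ strictly lower-triangular, together with the fact that $Y_{p_j}$ preserves the flag and hence is lower-triangular, the product $N^{(j)}(z) Y_{p_j}$ is strictly lower-triangular and therefore traceless; only the diagonal piece of $\theta$ contributes, so $\tr(\theta Y)\sim \tr(\diag(\lambda^{(j)}) Y_{p_j})\tfrac{dz}{z}$ at each puncture, and a standard residue computation yields the asserted formula. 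The main obstacle is the careful boundary analysis at the punctures: one must verify that the subleading non-diagonal pieces of $\theta$ and the $O(|z|)$ corrections to $Y$ contribute to $\int_{\partial D_\epsilon(p_j)}\tr(\theta Y)$ terms that vanish as $\epsilon\to 0$, and that $d\tr(\theta Y)$ is integrable on the punctured surface so that Stokes is legitimately applicable. Both points rely on the weighted Sobolev framework of Konno and Nakajima and the parabolic compatibility between $\theta$, $Y$, and the flag at each puncture.
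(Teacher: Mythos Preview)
Your proposal is correct and follows essentially the same route as the paper: both reduce $(J\alpha+iK\alpha)(a,b)$ to $2\int_\Sigma\tr(\theta\wedge a)$, use $\bar\partial_A\theta=0$ and the Leibniz rule to rewrite the integrand as $-d\,\tr(\theta Y)$, and then localize to the punctures via the residue theorem. Your extra remarks on the triangularity of $\Res_{p_j}\theta$ and $Y_{p_j}$ and on the integrability near the punctures simply make explicit what the paper leaves implicit.
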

\begin{proof}
A straightforward computation gives
\[
(J\alpha + iK\alpha)(a,b) = 2 \int_\Sigma \tr(\theta \w a).
\]
Thus
\begin{align*}
(J\alpha + iK\alpha)(\bar\partial_A Y, [\theta,Y]) &= 2 \int_\Sigma \tr(\theta \w \bar\partial_A Y) \\
&= -2 \int_\Sigma \tr\left(\bar\partial_A (\theta a)\right) \\
&= -2 \int_\Sigma d \tr(\theta Y) \\
&= -2 \sum_j \tr(\Res \theta_{p_j} Y_{p_j}).
\end{align*}
where the second equality comes from $\bar\partial_A \theta = 0$ and the last line is a consequence of the residue theorem.
\end{proof}

Therefore by proposition \ref{prop:reduction} and theorem \ref{thm:mainX}
\begin{theorem}
The moduli space $\M$ has a hyperholomorphic line bundle (or hyperholomorphic Lie algebroid extension if $\om_I$ is not quantizable) of curvature $2i\om_I - 2id(I\alpha)$.
\end{theorem}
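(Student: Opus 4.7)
The proof is essentially a bookkeeping exercise that combines the preceding proposition with Proposition \ref{prop:reduction} and Theorem \ref{thm:mainX}, whose hypotheses have by now all been set up on the configuration space $\mathcal{C}$. The plan is first to descend $\alpha$ from $\mathcal{C}$ to the hyperk\"ahler quotient $\M$, and then to invoke the main theorem to produce the hyperholomorphic 2-form; the line bundle then drops out from integrality of the cohomology class.

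To apply Proposition \ref{prop:reduction}, I would check four conditions on $\alpha$, viewed as a 1-form on $\mathcal{C}$. First, $\alpha$ is manifestly $\G$-invariant from the integral expression (\ref{alphaHiggs}), since $\tr$ is conjugation-invariant and $\theta, \theta^*, b, b^*$ all transform by conjugation under gauge transformations. Second, the identity $\alpha(Y^*)=0$ for $Y^* = (\bar\partial_A Y, [\theta, Y])$ is the direct calculation already recorded (the two terms $\tr(\theta \w [\theta,Y]^*)$ and $\tr([\theta,Y]\w\theta^*)$ cancel by the cyclicity of the trace). Combined with $d\alpha=0$, these give $\G$-basicness. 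Third, the remaining equations $d(J\alpha) = \om_J$ and $d(K\alpha) = \om_K$ from (\ref{alphatotalspace}) reduce to unwinding the definitions of $J$ and $K$ on $(a,b)$ and comparing with $\om_J, \om_K$; this is the ``straightforward computation'' flagged in the paper. Fourth, and crucially, we need the locally constant functions $(J\alpha)(Y^*)$ and $(K\alpha)(Y^*)$ on $\mu_\G^{-1}(0)$ to actually be constant. This is precisely what the preceding proposition achieves: its formula
\[
(J\alpha + iK\alpha)(Y^*) = -2 \sum_{j=1}^n \tr\bigl(\diag(\lambda_1^{(j)}, \ldots, \lambda_r^{(j)}) Y_{p_j}\bigr)
\]
has right-hand side depending only on $Y$ through its values at the punctures, independent of $(A,\theta)$. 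Taking real and imaginary parts gives constancy of the individual functions.

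With these checks in place, Proposition \ref{prop:reduction} produces a descended 1-form $\hat\alpha$ on $\M$ satisfying the hypotheses of Theorem \ref{thm:mainX} with $F_1 = (J\alpha)_\g \circ \Om$ and $F_2 = (K\alpha)_\g \circ \Om$, both of type $(1,1)$ in every complex structure. Theorem \ref{thm:mainX} then immediately yields the real 2-form $\hat\om_I - d(\hat I\hat\alpha)$ of type $(1,1)$ in each complex structure. Since $d(\hat I\hat\alpha)$ is exact, this form represents the same class as $\hat\om_I$ in $H^2(\M;\R)$, so prequantizability of $\hat\om_I$ (i.e.\ integrality of $[\hat\om_I]/2\pi$) gives a hyperholomorphic unitary line bundle whose curvature is $2i(\hat\om_I - d(\hat I\hat\alpha))$; the factor $2i$ tracks the normalization $g = 2i\int_\Sigma \tr(\cdots)$ built into $\om_I$. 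When the class is not integral, the twistor-space construction of Section \ref{sec:twistor} instead produces the advertised hyperholomorphic Lie algebroid extension $0 \to \O \to E \to T^{1,0}\M \to 0$.

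The only genuine obstacle is analytic rather than geometric: justifying the above on the infinite-dimensional $\mathcal{C}$ with its weighted Sobolev completions, and in particular verifying that the integration by parts and residue-theorem calculation in the preceding proposition remain valid for the singular parabolic $\theta$ and for $Y \in \A^0(\operatorname{Par}\mathfrak{su}(E))$. These issues are delegated to \cite{Konno:1993, Nakajima:1996}, and once they are granted the whole argument is a two-line citation of the already-established results.
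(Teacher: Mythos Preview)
Your proposal is correct and follows exactly the paper's approach: the paper's entire proof is the single sentence ``Therefore by proposition \ref{prop:reduction} and theorem \ref{thm:mainX}'', and you have simply unpacked the verification of hypotheses that the paper leaves as ``straightforward to check''. The one minor inaccuracy is your attribution of the Lie algebroid alternative to the twistor-space construction of Section \ref{sec:twistor}; in the paper's logic the Lie algebroid on $\M$ arises directly from the closed $(1,1)$-form $\om_I - d(I\alpha)$ (an Atiyah-type extension exists whether or not the class is integral), and Section \ref{sec:twistor} is a separate repackaging on $Z$ rather than the source of the non-integral case.
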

\begin{remark}
In the case of $SU(2)$ Higgs bundles, Konno \cite{Konno:1993b} shows that $\om_I$ is prequantizable if the parabolic weights satisfy
\[
2 \alpha^{(1)}, \ldots, 2 \alpha_r^{(n)}, \sum_{j=1}^n \alpha^{(j)} \in \Z.
\]
where $\alpha^{(j)}$ is the parabolic weight at the $j$th puncture.
\end{remark}

On an open dense set, the space $\M$ is a twisted cotangent bundle over the moduli space $\N$ of parabolic vector bundles.  This subspace of $\M$ is simply-connected since $\N$ is \cite{DasWen:1997} and so we can use the construction in section \ref{sec:twistor} to construct the holomorphic Lie algebroid on the twistor space of the twisted cotangent bundle.

Over a puncture, we have 
\[
\operatorname{Par}\mathfrak{su}(E_{p_j}) \simeq \{(it_1^{(j)}, \ldots, it_r^{(j)}) \in \mathfrak u(1)^r \mid t_1^{(j)} + \cdots + t_r^{(j)} = 0\}.
\]
and the representation $(J\alpha + i K\alpha)_{\Lie\G}$ is the composition
\[
\A^0(\operatorname{Par}\mathfrak{su}(E)) \to \prod_{j=1}^n \operatorname{Par}\mathfrak{su}(E_{p_j}) \stackrel{\underline \lambda}{\to} \C,
\]
where the last map is the representation of $\prod_{j=1}^n \mathfrak u(1)^r$ that has weights $-2\lambda_1^{(j)}, \ldots, -2\lambda_r^{(j)}$ on the $j$th factor.  If $\lambda_k^{(j)} \in \frac{r}{2} \Z$ then this last representation lifts to the Lie group 
\begin{align*}
\prod_{j=1}^n\operatorname{Par} &SU(E_{p_j}) / \Z_r \\
&\simeq \prod_{j=1}^n \{(e^{2\pi it_1^{(j)}}, \ldots, e^{2\pi it_r^{(j)}}) \in U(1)^r \mid t_1^{(j)} + \cdots + t_r^{(j)} = 0\} / \Z_r,
\end{align*}
in which case we also get a lift of the representation of $(J\alpha + i K\alpha)_{\Lie \G}$ to $\G$ (here $\Z_r$ is the group of constant gauge transformations, given by the $r$th roots of unity).  Thus from corollary \ref{quantization} we see

\begin{proposition}
The symplectic form $\om_J$ (resp. $\om_K$) is pre-quantizable if $\Im\lambda_k^{(j)} (\text{resp. $\Re \lambda_k^{(j)}$}) \in \frac{r}{2} \Z$ for all $j,k$.
\end{proposition}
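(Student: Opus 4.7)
The plan is to invoke Corollary~\ref{quantization}: it suffices to show, under the hypothesis $\Im \lambda_k^{(j)} \in \frac{r}{2}\Z$ (respectively $\Re\lambda_k^{(j)} \in \frac{r}{2}\Z$), that the Lie algebra homomorphism $i(J\alpha)_{\Lie\G}$ (respectively $i(K\alpha)_{\Lie\G}$) lifts to a group homomorphism $\G \to U(1)$. From the formula
\[
(J\alpha + iK\alpha)(Y^*) = -2\sum_{j=1}^n \tr\bigl(\diag(\lambda_1^{(j)},\ldots,\lambda_r^{(j)})\, Y_{p_j}\bigr)
\]
of the preceding proposition, parametrizing $Y_{p_j} = i\diag(t_1^{(j)},\ldots,t_r^{(j)})$ with $\sum_k t_k^{(j)}=0$ and separating real and imaginary parts yields
\[
(J\alpha)_{\Lie\G}(Y) = 2\sum_{j,k} \Im(\lambda_k^{(j)})\, t_k^{(j)},\qquad (K\alpha)_{\Lie\G}(Y) = -2\sum_{j,k} \Re(\lambda_k^{(j)})\, t_k^{(j)}.
\]
Both representations factor through the evaluation $\Lie\G \to \prod_j \operatorname{Par}\su(E_{p_j})$, so the lifting question reduces to a finite-dimensional one at each puncture.

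At a single puncture, the relevant target group is $\operatorname{Par} SU(E_{p_j})/\Z_r$ as identified in the preceding discussion. I would describe $\ker\exp \subset \operatorname{Par}\su(E_{p_j})$ by two families of generators: the coroot vectors $2\pi i(e_k - e_l)$, which already exponentiate to the identity in $\operatorname{Par} SU(E_{p_j})$, and elements $Y_0 = \frac{2\pi i}{r}\diag(r-1,-1,\ldots,-1)$ together with their permutations, which exponentiate to generators of the central $\Z_r$ (hence to the identity in the quotient). Substituting these into the formula above gives
\[
i(J\alpha)_{\Lie\G}\bigl(2\pi i(e_k - e_l)\bigr) = 4\pi i \bigl(\Im\lambda_k^{(j)} - \Im\lambda_l^{(j)}\bigr)
\]
and $i(J\alpha)_{\Lie\G}(Y_0) = 2\pi i\bigl(2\Im\lambda_1^{(j)} - \frac{2}{r}\sum_k \Im\lambda_k^{(j)}\bigr)$; under $\Im\lambda_k^{(j)} \in \frac{r}{2}\Z$ both values land in $2\pi i\Z$, so the character exponentiates. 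The parallel argument with $\Re\lambda_k^{(j)}$ in place of $\Im\lambda_k^{(j)}$ handles $\om_K$.

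The step I expect to require the most care is the reduction to the quotient $\operatorname{Par} SU(E_{p_j})/\Z_r$---one must verify that the constant central $\Z_r \subset \G$ acts trivially on the prequantum bundle of $\om_I$ (so that the Kostant action descends), and identify $\ker\exp$ for the quotient group precisely. The factor $\frac{r}{2}$ in the hypothesis is exactly what is needed to clear both kinds of kernel generator, and a misidentification of either lattice would shift the integrality threshold.
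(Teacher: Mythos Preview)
Your approach is essentially the paper's: factor $(J\alpha)_{\Lie\G}$ and $(K\alpha)_{\Lie\G}$ through evaluation at the punctures, check the resulting weight-type representation of the finite-dimensional torus integrates under the integrality hypothesis, and invoke Corollary~\ref{quantization}. The paper carries this out in the paragraph immediately preceding the proposition (so the proposition has no separate proof block); it simply asserts that the weight-$(-2\lambda_k^{(j)})$ representation lifts to $\prod_j \operatorname{Par} SU(E_{p_j})/\Z_r$ when $\lambda_k^{(j)}\in\frac{r}{2}\Z$, whereas you make the lattice check explicit by evaluating on coroots and on the extra generator $Y_0$ coming from the $\Z_r$ quotient. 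Your flagged concern about the $\Z_r$ is exactly the point the paper handles by declaring $\Z_r$ to be the constant central gauge transformations and working with the quotient group; one small thing to watch is that, since the constant $\Z_r$ has the \emph{same} value at every puncture, the quotient is by the diagonal $\Z_r\subset\prod_j\operatorname{Par} SU(E_{p_j})$, so the extra kernel generator is $Y_0$ placed simultaneously at all punctures rather than at one puncture at a time---but under the hypothesis $\Im\lambda_k^{(j)}\in\frac{r}{2}\Z$ your computation goes through in either reading.
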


\begin{remark}
From this perspective we can relate the form $F_1 + i F_2$ to hyperholomorphic structures that occur in wall-crossing in physics \cite{Gai-Mo-Neit:2012}.
We have the exact sequence
\[
1 \to \G_{\underline p} \to \G \to \prod_{j=1}^n\operatorname{Par} SU(E_{p_j}) / \Z_r \to 1,
\]
where $\G_{\underline p}$ is the normal subgroup of gauge transformations that restrict to the identity at every puncture.  Then the moduli space $\M$ can be obtained by performing hyperk\"ahler reduction in steps: we can first form the hyperk\"ahler quotient of $\mathcal C$ by the action of $\G_{\underline p}$.  The resulting (finite-dimensional) hyperk\"ahler manifold, $\M'$, will have a hamiltonian action of $\prod_{j=1}^n\operatorname{Par} SU(E_{p_j}) / \Z_r$ and taking the hyperk\"ahler quotient gives $\M$.  If we let $\mu_{\underline p}$ denote this last moment map, then $\mu_{\underline p}^{-1}(0) \to \M$ is a principal $\prod_{j=1}^n\operatorname{Par} SU(E_{p_j}) / \Z_r$-bundle.  The curvature of this principal bundle will be a 2-form with values in $\su(E_{p_j})$ which is (1,1) in each complex structure \cite{Gocho-Nakajima:1992}.  This hyperholomorphic projective bundle is considered in the case of (non-singular) Higgs bundle in \cite{Gai-Mo-Neit:2012}.  Now because of the complete flag we have a decomposition of the fibers $E_{p_j}$ into a direct sum of lines, which gives a decomposition of the curvature 2-form in terms of scalar 2-forms.  Then the 2-form $F_1 + i F_2$ is the linear combination of these 2-forms weighted by (-2 times) the eigenvalues of the residues of the Higgs fields.
\end{remark}

From (\ref{Fcurv}) the form $F_1 + i F_2$ is $(J\alpha + i K\alpha)_{\Lie(\G)} \circ \Om$, where $\Om$ is the curvature of the connection on the infinite rank principal bundle $\G \to \mu_\G^{-1}(0) \to \M$.  Using similar arguments as in \cite{DonKron,GrosPark}, one sees that
\[
\Om(A, \theta)((a_1, b_1),(a_2, b_2)) = -2 G_{(A,\theta)} \ad^*_{(a_1,b_1)}(a_2,b_2),
\]
where $G_{(A,\theta)}$ is the Green's operator in degree 0 of the complex
\[
\A^0(\operatorname{Par} \mathfrak{su}(E)) \stackrel{\nabla^A + \theta}{\to} \A^1(\mathfrak{su}(E)(D)) \to \cdots.
\]

We therefore see that $F_1 + i F_2$ vanishes in the abelian case of $U(1)$-Higgs bundles.  Indeed, in the $U(1)$ case Hitchin's equations decouple and the moduli space is just the product space of holomorphic line bundles with $H^0(K(D))_{\underline\lambda}$, the space of meromorphic 1-forms with simple poles at each $p_j$ of residue $\lambda^{(j)}$.  $H^0(K(D))_{\underline\lambda}$ is affine on $H^0(K)$ and picking a fixed parabolic Higgs field $\theta_0 \in H^0(K(D))_{\underline\lambda}$ determines a diffeomorphism that preserves the hyperk\"ahler structures.  From our set-up, and the discussion to take place in the following section, we see that there is actually a canonical choice of $\theta_0$ given by the vanishing locus of $\alpha$, i.e. $\theta_0$ is determined by
\[
\int_\Sigma \theta_0 \w \bar b = 0
\]
for all $b \in H^0(K)$.  Equivalently, 
\[
\int_\Sigma \theta_0 \w a = 0 \text{ for all harmonic $(0,1)$ forms $a$}.
\]

\subsubsection{Relationship to the $S^1$-action on the moduli space of all parabolic Higgs bundles}
The moduli space of all parabolic Higgs bundles (where the eigenvalues of residues are not fixed) is a holomorphic Poisson manifold \cite{LogMart} whose symplectic leaves are the hyperk\"ahler manifolds defined above.  From our perspective, we let $\P$ be the moduli space of solutions to Hitchin's equations (\ref{Hitchinseqs}) as above except we no longer fix the eigenvalues of the residues of the Higgs field.  There is a natural $S^1$-action on $\P$ given by scaling the Higgs field and we let $\tilde X$ denote the vector field on $\P$ that generates it.  We will show that there is a canonical projection map $p :T\P \to T\M$, where $T\M \subset T\P$ is the distribution underlying the foliation by Higgs fields whose residues have fixed eigenvalues, and that $p(\tilde X)$ is the vector field $X$ of theorem \ref{thm:mainX} (i.e. $\alpha = i_{p(\tilde X)} \om_I$).

By linearizing (\ref{Hitchinseqs}), at a point $(A,\theta) \in \P$ we have
\begin{align*}
T_{(A,\theta)} \P &= \frac{\left\{(a,b) \mid \begin{aligned}
\partial_E a - \bar\partial_E a^* + [\theta,b^*] + [b,\theta^*] = 0, \\
 [a,\theta] + \bar\partial_E b = 0.
 \end{aligned} \right\}}{\{(\bar\partial_A Y, [\theta,Y]) \mid Y \in \A^0(\operatorname{Par}\su(E))\}} \\
&\subset \frac{\A^{0,1}(\sl(E)) \times \A^{1,0}(\operatorname{Par}\mathfrak{sl}(E)(D))}{\A^0(\operatorname{Par}\mathfrak{su}(E))}
\end{align*}
and similarly for $T_{(A,\theta)}\M$, except that $b$ must lie in $\A^{1,0}(\operatorname{SPar}\mathfrak{sl}(E)(D))$.  It is then straightforward to check that
\[
\tilde \om_I : T\P \otimes T\M \to \R, ~~ \tilde \om_I((a_1,b_1),(a_2,b_2)) = \int_\Sigma \tr(a_1^*\w a_2 - a_2^* \w a_1 - b_1 \w b_2^* + b_2 \w b_1^*)
\]
is well-defined and restricts to the K\"ahler form $\om_I$ on any symplectic leaf.  This in turn defines a natural projection $p : T\P \to T\M$ via $\om_I(p(v),w) = \tilde \om_I(v, w)$ for all $w \in T\M$.  The vector field $\tilde X$ on $\P$ is given by $\tilde X_{(A,\theta)} = (0,i\theta)$ and $X$ is defined by $i_X \om_I = \alpha$. From (\ref{alphaHiggs}) and the definition of $\tilde \om_I$, we therefore see that $p(\tilde X) = X$.

\subsection{Nakajima quiver varieties}
We first recall the following general set-up.  Suppose $V$ is a hermitian vector space.  Then $T^* V \simeq V\times V^*$ has the structure of a hyperk\"ahler manifold, where $I$ is given by multiplication by $i$ and 
\[
J(v',w') = (-w'^*, v'^*), ~~ (v',w') \in V\oplus V^* \simeq T_{(v,w)} (T^* V),
\]
where $*: V \longleftrightarrow V^*$ is the conjugate linear isomorphism determined by the hermitian metric.  The action of $S^1$ on $T^* V$ given by scalar multiplication on $V^*$ satisfies the hypotheses of theorem \ref{thm:11form} and the moment map of this action is
\[
\mu : T^* V \to \R, ~~ (v,w) \mapsto -\frac{1}{2} ||w||^2.
\]
Letting $\alpha = d\mu$, we record for later use the following useful equations:
\begin{gather}
\alpha_{(v,w)} (v',w') = -g((0,w), (0,w')) \label{vspacealpha} \\
(J\alpha + i K\alpha)_{(v, w)}(v',w') = (\om_J + i\om_K)((0,w), (v',w')). \label{10alpha} 
\end{gather}

We will now consider the Nakajima quiver varieties \cite{Nakajima:1994}. Let $\mathcal Q$ be a quiver (i.e. directed graph) with $n$ vertices labeled $\{1,2,\ldots,n\}$ and let $\mathbf v = (v_1,\ldots, v_n), \mathbf w = (w_1,\ldots, w_n) \in \Z^n$.  Let
\[
M_{\mathcal Q} = \bigoplus_{i\to j \in \mathcal Q} \Hom(\C^{v_i}, \C^{v_j}) \oplus \bigoplus_{k=1}^n \Hom(\C^{w_k}, \C^{v_k})
\]
be the space of representations of the framed quiver of $\mathcal Q$ (here we are identifying $\mathcal Q$ with its set of oriented edges).  Then $M_{\mathcal Q}$ is a hermitian vector space (coming from the standard hermitian structure on $\C^m$) so that, by the preceding discussion, $T^* M_{\mathcal Q}$ is a hyperk\"ahler vector space with $S^1$-action satisfying the conditions of theorem \ref{thm:11form}.  If $H$ denotes the union of the edges of $\mathcal Q$ along with the edges with the opposite orientation then we have
\[
T^*M_{\mathcal Q} = \bigoplus_{i\to j \in H} \Hom(\C^{v_i}, \C^{v_j}) \oplus \bigoplus_{k=1}^n \Hom(\C^{w_k}, \C^{v_k}) \oplus \bigoplus_{k=1}^n \Hom(\C^{v_k}, \C^{w_k}).
\]
Using the notation of \cite{Nakajima:1994}, we write $(B, i, j) = (B_h, i_k, j_k)_{h\in H,k\in\{1,\ldots,n\}}$ for an element of $T^*M_{\mathcal Q}$, where $h\in H$ is an arrow from vertex $s(h)$ to $t(h)$, $B_h \in \Hom(\C^{v_{s(h)}}, \C^{v_{t(h)}}), i_k \in \Hom(\C^{w_k}, \C^{v_k}),$ and $j_k \in \Hom(\C^{v_k}, \C^{w_k})$.  Then we have
\begin{align}
(\om_J + i \om_K)((B, i, j), (B',i',j')) = \sum_{h\in H} & \tr(\eps(h) B_h B_{h'}) \nonumber\\
& + \sum_{k=1}^n \tr(i_k j_k' - i_k' j_k), \label{holsym}
\end{align}
where we are identifying a fiber of $T(T^* M_{\mathcal Q})$ with $T^* M_{\mathcal Q}$ and $\eps(h) = 1$ if $h \in \mathcal Q$ and $-1$ otherwise.

The group
\[
G_{\mathbf v} = \prod_{j=1}^n U(\C^{v_j})
\]
acts on $T^* M_\Om$ preserving the hyperk\"ahler structure.  The action vector field corresponding to $Y = (Y_1,\ldots, Y_n) \in \g_{\mathbf v}$ is 
\[
Y_{(B,i,j)} = (Y_{t(h)} B_h - B_h Y_{s(h)}, Y_k i_k, -j_k Y_k)_{h \in H, k\in\{1,\ldots,n\}} 
\]
Let $Z_{\mathbf v} \simeq \u(1)^{\oplus n}$ denote the center of the Lie algebra $\g_{\mathbf v}$ of $G_{\mathbf v}$ and fix elements $\zeta_\R = ((\zeta_\R)_1, \ldots, (\zeta_\R)_n)\in Z_{\mathbf v}$ and $\zeta_\C = ((\zeta_\C)_1, \ldots, (\zeta_\C)_n) \in Z_{\mathbf v}\otimes\C$.  Then, identifying $\g_{\mathbf v}$ with its dual via the hermitian inner product, hyperk\"ahler moment maps are
\begin{gather*}
\mu_I : T^* M_{\mathcal Q} \to \g_v = \bigoplus_{k=1}^n \u(\C^{v_k}) \\
\mu_J + i \mu_K : T^* M_{\mathcal Q} \to \g_v\otimes\C = \bigoplus_{k=1}^n \gl(\C^{v_k}) 
\end{gather*}
whose $k$th components are
\begin{gather*}
(\mu_I(B,i,j))_k = \frac{1}{2} \left(\sum_{h \in H, t(h) = k} (B_h B_h^* - B_{\bar h}^* B_{\bar h} + i_k i_k^* - j_k^* j_k) \right) - (\zeta_\R)_k \in \u(\C^{v_k})\\
((\mu_J + i \mu_K)(B,i,j))_k = \sum_{h\in H, t(h) = k} (\eps(h) B_h B_{\bar h} + i_k j_k) - (\zeta_\C)_k \in \gl(\C^{v_k}),
\end{gather*}
where $\bar h$ denotes the edge $h$ with the opposite orientation.  We let $\M_{(\zeta_\R, \zeta_\C)}$ be the hyperk\"ahler quotient with respect to this moment map.

From the discussion at the beginning of this section, $\alpha$ satisfies
\[
d\alpha = 0, ~~ d(J\alpha) = \om_J, ~~ d(K\alpha) = \om_K
\]
and from equations (\ref{vspacealpha}) and (\ref{10alpha}) along with (\ref{holsym}), it is straightforward to see that
\begin{gather*}
\alpha(Y^*) = 0 \\
(J\alpha + i K\alpha)(Y^*) = -2 \sum_{j=1}^n (\zeta_\C)_j \tr(Y_j),
\end{gather*}
where the last equation is restricted to the 0 level set of the moment map $(\mu_I, \mu_J, \mu_K)$.  Thus by proposition \ref{prop:reduction}, $\M_{(\zeta_\R, \zeta_\C)}$ has a hyperholomorphic Lie algebroid.  The above equation along with corollary \ref{quantization} gives
\begin{proposition}
The symplectic form $\om_J$ on $\M_{(\zeta_\R, \zeta_\C)}$ is pre-quantizable if $\Im (\zeta_\C)_1, \ldots, \Im(\zeta_\C)_n \in \frac{1}{2}\Z$.  Similarly, $\om_K$ is pre-quantizable if \\
$\Re (\zeta_\C)_1, \ldots, \Re(\zeta_\C)_n \in \frac{1}{2}\Z$.
\end{proposition}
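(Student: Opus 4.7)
The proposition is an immediate application of Corollary \ref{quantization} combined with the explicit formula $(J\alpha + iK\alpha)(Y^*) = -2\sum_j (\zeta_\C)_j \tr(Y_j)$ derived just before the statement. The plan is to show that the hypothesis $\Im((\zeta_\C)_k) \in \tfrac{1}{2}\Z$ is exactly what is needed to lift the Lie algebra homomorphism $i(J\alpha)_\g : \g_{\mathbf v} \to i\R$ to a genuine character $G_{\mathbf v} \to U(1)$, and analogously in the $K$-case.

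First, I would identify the characters of $G_{\mathbf v} = \prod_{k=1}^n U(\C^{v_k})$. Each factor $U(v_k)$ has character group $\Z$ generated by $\det$, so the characters of $G_{\mathbf v}$ are indexed by tuples $(n_1,\ldots,n_n) \in \Z^n$ via $(g_k) \mapsto \prod_k \det(g_k)^{n_k}$. Differentiating, the corresponding map $\g_{\mathbf v} \to i\R$ sends $Y = (Y_k)$ to $\sum_k n_k \tr(Y_k)$, and since each $Y_k$ is skew-hermitian, $\tr(Y_k)$ is genuinely purely imaginary.

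Next, I would extract the real and imaginary parts of the given identity. Writing $(\zeta_\C)_j = a_j + ib_j$ and $\tr(Y_j) = it_j$ with $a_j, b_j, t_j \in \R$, a line of arithmetic gives $(J\alpha)(Y^*) = 2\sum_j b_j t_j$ and $(K\alpha)(Y^*) = -2\sum_j a_j t_j$, hence
\[
i(J\alpha)_\g(Y) = \sum_k \bigl(2\Im((\zeta_\C)_k)\bigr)\tr(Y_k), \quad i(K\alpha)_\g(Y) = \sum_k \bigl(-2\Re((\zeta_\C)_k)\bigr)\tr(Y_k).
\]
Matching with the derivative $\sum_k n_k \tr(Y_k)$ of a general character, $i(J\alpha)_\g$ lifts to $G_{\mathbf v} \to U(1)$ precisely when $2\Im((\zeta_\C)_k) \in \Z$ for all $k$, and similarly for $i(K\alpha)_\g$ with $\Re$ in place of $\Im$. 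Corollary \ref{quantization} then yields the prequantizability of $\om_J$ and $\om_K$ under the stated hypotheses.

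There is no serious obstacle here; the proof is essentially bookkeeping. The only point requiring care is the factor of $i$ arising from the skew-hermiticity of the $Y_k$, which is precisely what converts integrality of the coefficients of $\tr(Y_k)$ into the half-integrality condition on $\Re((\zeta_\C)_k)$ and $\Im((\zeta_\C)_k)$ rather than an integrality condition on $\zeta_\C$ itself.
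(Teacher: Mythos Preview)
Your proposal is correct and follows exactly the paper's approach: the paper simply states that the proposition follows from the formula $(J\alpha + iK\alpha)(Y^*) = -2\sum_j (\zeta_\C)_j \tr(Y_j)$ together with Corollary~\ref{quantization}. You have supplied the routine details the paper omits, namely the identification of the characters of $G_{\mathbf v}=\prod_k U(v_k)$ and the arithmetic showing that $i(J\alpha)_{\g_{\mathbf v}}$ and $i(K\alpha)_{\g_{\mathbf v}}$ integrate to $G_{\mathbf v}\to U(1)$ precisely under the stated half-integrality conditions.
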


\subsection{Nahm's equations}
Now we will show that the moduli space of solutions to Nahm's equations \cite{Biel:2007,Kron:1990} has a canonical 1-form $\alpha$ satisfying (\ref{alphaprops}).  Let $G$ be a compact Lie group with an $\Ad$-invariant inner product $\langle \cdot, \cdot\rangle$ on its Lie algebra $\g$.  Fix $\tau_1,\tau_2,\tau_3 \in \g$ such that the intersection of the centralizers is a Cartan subalgebra $\h \subset \g$.  We define
\[
\mathcal A_{\tau_1,\tau_2,\tau_3} = \{ T_0 + i T_1 + j T_2 + k T_3 : [0,\infty) \to \g \otimes \mathbb H \mid \substack{T_0 \to 0, \\ T_i \to \tau_i, i=1,\cdots 3}\},
\]
where the convergence is exponentially fast.  Write $\underline T$ for $(T_0,T_1,T_2,T_3)$.

The space $\mathcal A_{\tau_1,\tau_2,\tau_3}$ is an affine space modeled on $\mathcal A_{0,0,0}$ and we have a hyperk\"ahler structure defined as follows.  The metric is given by
\begin{gather*}
||(t_0, t_1, t_2, t_3)||^2 = \int_0^\infty \sum_{j=0}^3 \langle t_j(s), t_j(s)\rangle ds, \\
 (t_0,t_1,t_2, t_3) \in \A_{0,0,0} \simeq T_{\underline T} \A_{\tau_1,\tau_2,\tau_3}
\end{gather*}
and the complex structures $I, J, K$ are given by right multiplication by $-i, -j, -k$, respectively.

Let
\[
\G = \{ g : [0,\infty) \to G \mid g(0) = e, \lim_{s \to \infty} g(s) \in \exp\h\}
\]
where $e \in G$ is the identity element.  Then $\G$ acts on $\mathcal A_{\tau_1,\tau_2,\tau_3}$ via
\[
g \cdot (T_0,T_1,T_2,T_3) = (\Ad_g T_0 - \dot g g^{-1}, \Ad_g T_1, \Ad_g T_2, \Ad_g T_3),
\]
preserving the hyperk\"ahler structure.  The action is Hamiltonian and we have the moment maps
\begin{gather*}
\mu_I(T_0,T_1,T_2,T_3) = \dot T_1 + [T_0, T_1]- [T_2, T_3] \\
\mu_J(T_0,T_1,T_2,T_3) = \dot T_2 + [T_0, T_2]- [T_3, T_1] \\
\mu_K(T_0,T_1,T_2,T_3) = \dot T_3 + [T_0, T_3]- [T_1, T_2],
\end{gather*}
which are called Nahm's equations.  Let $\M = \mathcal A_{\tau_1,\tau_2,\tau_3} // \G$ be the hyperk\"ahler reduction at 0.  

Now suppose $\tau_2 = \tau_3 = 0$.  Then we have an $S^1$ action on $\A_{\tau_1,0,0}$ via
\[
e^{i\theta} \cdot (T_0,T_1,T_2 + i T_3) = (T_0, T_1, e^{i\theta}(T_2+iT_3)),
\]
which commutes with the $\G$-action.  The moment map is given by $\underline T \mapsto -\int_0^\infty \langle T_2(s), T_2(s) \rangle + \langle T_3(s), T_3(s) \rangle$ and its exterior derivative is the $\G$-invariant 1-form
\[
\alpha_{\underline T}(\underline t) = -\int_0^\infty (\langle T_2(s), t_2(s)\rangle + \langle T_3(s), t_3(s)\rangle) ds.
\]
If $\tau_2,\tau_3 \ne 0$ then the integral defining the moment map diverges but $\alpha$ is still well-defined since $t_2$ and $t_3$ converge to 0 exponentially fast and $T_2$ and $T_3$ are bounded.

The action vector field corresponding to
\[
Y \in \Lie(\G) = \{Y : [0, \infty) \to \g \mid Y(0) = 0, \lim_{s\to\infty} Y(s) \in \h\}
\]
is given by
\begin{equation}\label{actionfield}
Y^*_{(T_0, T_1, T_2, T_3)} = ([Y, T_0] - \dot Y, [Y, T_1], [Y, T_2], [Y, T_3]).
\end{equation}

It is straightforward to verify that on $\A_{\tau_1,\tau_2,\tau_3}$, $\alpha$ satisfies
\[
d\alpha = 0, ~ d(J\alpha) = \om_J, ~ d(K\alpha) = \om_K, ~ \alpha(Y^*) = 0.
\]
To invoke proposition \ref{prop:reduction}, we just need to show that the (necessarily locally constant) functions $\underline T\mapsto (J\alpha)_{\underline T}(Y^*), (K\alpha)_{\underline T}(Y^*)$ are indeed constant on the level set of the moment map.
\begin{proposition}
On the solution space to Nahm's equations, we have
\[
(J\alpha)(Y^*) = -\langle \tau_2, Y(\infty) \rangle, ~~ (K\alpha)(Y^*) = -\langle \tau_3, Y(\infty)\rangle.
\]
\end{proposition}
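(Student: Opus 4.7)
The plan is a direct computation: expand $(J\alpha)(Y^*)$ and $(K\alpha)(Y^*)$ using the explicit formula for $\alpha$, integrate by parts to remove the $\dot Y$ term coming from the $T_0$-component of $Y^*$, invoke $\Ad$-invariance of $\langle\cdot,\cdot\rangle$ to reorganize commutator terms, and recognize the remaining integrand as $\mu_J$ (respectively $\mu_K$), which vanishes on the level set of the hyperk\"ahler moment map.

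The first step is bookkeeping: the complex structures $I,J,K$ act on $\underline t = (t_0,t_1,t_2,t_3)$ by right multiplication of $t_0 + it_1 + jt_2 + kt_3$ by $-i,-j,-k$. A short quaternion computation gives
\[
J(t_0,t_1,t_2,t_3) = (t_2,t_3,-t_0,-t_1), \quad K(t_0,t_1,t_2,t_3) = (t_3,-t_2,t_1,-t_0).
\]
Substituting into the definition of $\alpha$, one finds
\[
(J\alpha)_{\underline T}(\underline t) = \int_0^\infty \bigl(\langle T_2,t_0\rangle + \langle T_3,t_1\rangle\bigr)\,ds, \quad (K\alpha)_{\underline T}(\underline t) = \int_0^\infty \bigl(\langle T_3,t_0\rangle - \langle T_2,t_1\rangle\bigr)\,ds.
\]

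Next, plug in $Y^* = ([Y,T_0]-\dot Y,\,[Y,T_1],\,[Y,T_2],\,[Y,T_3])$ from (\ref{actionfield}). The term containing $-\dot Y$ is handled by integration by parts, which produces a boundary contribution $-\langle \tau_2, Y(\infty)\rangle$ (since $Y(0)=0$ and $T_2(s)\to \tau_2$ exponentially) plus $\int_0^\infty \langle \dot T_2, Y\rangle\, ds$, and likewise for $K\alpha$ with $\tau_3$. For the commutator terms, $\Ad$-invariance in the form $\langle [X,Y],Z\rangle = \langle X,[Y,Z]\rangle$ lets us move everything to be paired against $Y$:
\[
\langle T_2,[Y,T_0]\rangle = \langle Y,[T_0,T_2]\rangle, \qquad \langle T_3,[Y,T_1]\rangle = \langle Y,[T_1,T_3]\rangle,
\]
and analogously for $K\alpha$.

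Combining everything,
\[
(J\alpha)(Y^*) = -\langle \tau_2, Y(\infty)\rangle + \int_0^\infty \bigl\langle Y,\, \dot T_2 + [T_0,T_2] - [T_3,T_1]\bigr\rangle\, ds,
\]
and the integrand is exactly $\mu_J$, which vanishes on the zero level set. The same manipulation for $K\alpha$ produces the integrand $\dot T_3 + [T_0,T_3] - [T_1,T_2] = \mu_K$, again zero on the moment map level set, leaving the boundary term $-\langle \tau_3, Y(\infty)\rangle$. The main subtlety is not mathematical but bureaucratic: ensuring the signs from right quaternion multiplication and from $\Ad$-invariance line up so that the residual integrand is precisely $\mu_J$ (not $-\mu_J$ or some other combination); once that matches, the proof closes immediately.
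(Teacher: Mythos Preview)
Your proof is correct and follows essentially the same route as the paper: compute $(J\alpha)(\underline t)$ explicitly, substitute the action field $Y^*$, integrate the $\dot Y$ term by parts to produce the boundary contribution $-\langle\tau_2,Y(\infty)\rangle$, use $\Ad$-invariance to rewrite the commutator terms, and recognize the remaining integrand as the Nahm equation $\mu_J=0$; the paper then just says the $K\alpha$ case is similar, which you carry out in full.
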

\begin{proof}
We have
\[
(J\alpha)_{\underline T}(\underline t) = \alpha_{\underline T}(t_2, t_3, -t_0, -t_1) = \int_0^\infty \langle T_2, t_0\rangle + \langle T_3, t_1\rangle
\]
so that using (\ref{actionfield}) we compute
\begin{align*}
(J\alpha)_{\underline T}(Y^*) &= \int_0^\infty \left(\langle T_2, [Y, T_0] - \dot Y\rangle + \langle T_3, [X, T_1]\rangle\right) \\
&= \int_0^\infty \left(-\frac{d}{ds} \langle T_2, Y\rangle + \langle \dot T_2, Y\rangle + \langle T_2, [Y, T_0]\rangle + \langle T_3, [X,T_1]\rangle \hspace{-4pt} \right) \\
&= -\langle \tau_2, Y(\infty) \rangle + \int_0^\infty \langle \underbrace{\dot T_2 + [T_0, T_2] + [T_1, T_3]}_{\text {Nahm's eq}}, X\rangle \\
&= -\langle \tau_2, Y(\infty)\rangle.
\end{align*}

The proof for $K\alpha$ is similar.
\end{proof}

From corollary \ref{quantization} we have
\begin{proposition}
The symplectic form $\om_J$ (resp. $\om_K$) is pre-quantizable if the element in $\h^*$ dual to $\tau_2$ (resp. $\tau_3$) via the Killing form lies in the weight lattice.
\end{proposition}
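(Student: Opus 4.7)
The plan is to apply Corollary \ref{quantization} directly to the computation in the previous proposition. That result identifies the Lie algebra homomorphism $(J\alpha)_{\Lie\G} : \Lie\G \to \R$ with $Y \mapsto -\langle \tau_2, Y(\infty)\rangle$, so it manifestly factors through the evaluation-at-infinity map $\operatorname{ev}_\infty : \Lie\G \to \h$, $Y \mapsto Y(\infty)$.

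The key observation I would invoke is that $\operatorname{ev}_\infty$ integrates to a group homomorphism $\G \to T := \exp\h$ onto the maximal torus of $G$ corresponding to $\h$; this is immediate from the definition of $\G$, whose elements are required to converge at infinity to an element of $\exp\h$. Consequently, lifting the representation $iY \mapsto -i\langle\tau_2, Y(\infty)\rangle$ from $\Lie\G \to i\R$ to a character $\G \to U(1)$ reduces to lifting the linear functional $\chi_2 : \h \to i\R$, $Z \mapsto -i\langle \tau_2, Z\rangle$, to a character of the torus $T$.

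At that point the argument becomes entirely a statement about compact tori: a purely imaginary linear functional on $\h$ integrates to a character of $T = \h/\ker(\exp|_\h)$ if and only if it takes values in $2\pi i\Z$ on the kernel lattice, equivalently if and only if the element of $\h^*$ it determines lies in the weight lattice of $G$. Using the $\Ad$-invariant inner product (which agrees with the Killing form up to normalization) to identify $\h \simeq \h^*$, this element is $-\tau_2^\flat$, and since the weight lattice is invariant under negation the lifting criterion becomes precisely that $\tau_2^\flat \in \h^*$ lies in the weight lattice; Corollary \ref{quantization} then yields the prequantization of $\om_J$. The same argument with $\tau_3$ and $(K\alpha)_{\Lie\G}$ in place of $\tau_2$ and $(J\alpha)_{\Lie\G}$ handles $\om_K$.

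The only subtlety worth flagging is the bookkeeping of conventions: the identification $\h \simeq \h^*$ via the chosen inner product versus the Killing form, the $2\pi$ in the exponential map, and the weight versus coweight lattice. None of these affect the statement, but in a fully rigorous write-up one would want to pin them down so that the identification of $\tau_2^\flat$ with a weight is unambiguous.
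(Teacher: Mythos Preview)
Your proposal is correct and follows exactly the route the paper takes: the paper simply records that the proposition is a consequence of Corollary~\ref{quantization}, and you are supplying the (straightforward) verification that the hypothesis of that corollary---namely that $i(J\alpha)_{\Lie\G}$ lifts to a character of $\G$---is equivalent to $\tau_2^\flat$ lying in the weight lattice, via the factorization through $\operatorname{ev}_\infty:\G\to T$. The caveat you flag about normalizations (inner product versus Killing form, the $2\pi$ convention) is well taken and is indeed the only thing one would need to pin down in a complete write-up.
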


The space $\M$ is diffeomorphic to a complex coadjoint orbit of the complexification of $G$ \cite{Kron:1990} and is therefore simply-connected if $G$ is.  Thus in this case the construction of section \ref{sec:twistor} can be used to construct the holomorphic line bundle on its twistor space.

\appendix
\section{Construction of the line bundle/Lie algebroid on twistor space}
\subsection{Definition of $\phi_j$}

Let $F = F_1 + i F_2$ and
\[
\tilde F = \frac{1}{\zeta} F - \zeta \overline F \in \A^{1,1}_V(2).
\]
Let $Y = X + i\zeta\frac{\partial}{\partial \zeta}$ where $X$ is the vector field for the infinitesimal $S^1$-action and the sum is via the $C^\infty$ decomposition $TZ = T M \oplus T\C P^1$. 

Using (\ref{Xprops}), it is straightforward to verify that
\begin{equation}
\L_Y \om = \tilde F. \label{liederiv}
\end{equation}

Let $\{U_j\}$ be an open cover of $U$ such that on each $U_j$ we have:
\begin{itemize}
\item A holomorphic lift $Z_j \in \Gamma(U_j; T_Z^{1,0})$ of $\frac{\partial}{\partial \zeta}$ satisfying
\begin{equation}
\L_{Z_j} i\zeta \om = 0, \label{darb}
\end{equation}
which is equivalent to  
\begin{equation}
\L_{Z_j} \om = -\frac{1}{\zeta} \om. \label{liederiv2}
\end{equation}

\item A 1-form $A_j \in \Gamma(U_j; \Lambda^{1,0} T_V^*)$ such that $dA_j = \overline F\vert_{U_j}$\footnote{Such an $A_j$ exists since we can find (at least locally) a line bundle and connection with curvature $\bar F$.  Then  the (0,1) part defines a holomorphic structure so we can take $A_j$ to be the connection forms in a holomorphic gauge.}.
\end{itemize}

Each $Z_j$ defines a local holomorphic splitting $TZ \simeq TM \oplus T\C P^1$ over $U_j$.  If $\mu \in \A_v^\bullet$ then we let $\mu_j \in \A^\bullet_{U_j}$ denote the corresponding differential form with respect to this splitting.  

Define the vertical vector fields
\[
X_{jk} = Z_k - Z_j \in \Gamma(T_V\vert_{U_j\cap U_k })
\]
and
\[
T_j = Z_j - \frac{\partial}{\partial \zeta} \in \Gamma(T_V\vert_{U_j}).
\]

We will make use of the following.
\begin{lemma} \label{lemma:split}
For any $\mu \in \Gamma(Z; \Lambda^\bullet T_V^*)$, we have
\[
\mu_k - \mu_j = -d\zeta \w i_{X_{jk}} \mu
\]
\[
d \mu_j = (d_V \mu)_j + d\zeta \w (\L_{Z_j} \mu)_j
\]
and if $v \in \Gamma(TZ)$ projects to a vector field on $\C P^1$, then
\[
\L_v \mu_j = (\L_v \mu)_j + d\zeta \w (i_{[Z_j, v_j]} \mu)_j,
\]
where $v_j$ is the vertical part of $v$ with respect to the splitting determined by $Z_j$.
\end{lemma}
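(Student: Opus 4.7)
The plan is to verify each identity by decomposing forms on $U_j$ using the splitting $TZ\vert_{U_j} = T_V \oplus \langle Z_j \rangle$: any form $\om$ on $U_j$ decomposes uniquely as $\om = \om^V + d\zeta \w (i_{Z_j}\om)$ where $i_{Z_j}\om^V = 0$, and by construction the extension $\mu_j$ of $\mu \in \A^\bullet_V$ is the unique extension with vanishing $d\zeta$-component. In every step, the key pointwise fact to keep track of is that $\pi_*[\cdot,\cdot] = [\pi_*\cdot,\pi_*\cdot]$, which forces brackets like $[Z_j, u]$, $[v_j,u]$, and $[Z_j,v_j]$ to be vertical whenever the appropriate argument is vertical.

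For the first identity, the argument is pure multilinear algebra. Writing each test vector as $v_l = v_l^j + a_l Z_j$ with $a_l = d\zeta(v_l)$, the decomposition with respect to the $Z_k$-splitting has vertical part $v_l^j - a_l X_{jk}$. Expanding $\mu(v_1^j - a_1 X_{jk}, \ldots, v_p^j - a_p X_{jk})$ by multilinearity, every term with $X_{jk}$ in more than one slot vanishes by antisymmetry, so only the first-order contributions survive and assemble into $-d\zeta \w i_{X_{jk}}\mu$ after matching signs.

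For the second identity, evaluate $d\mu_j$ on tuples of vertical vectors: since brackets of vertical fields are vertical, the Cartan formula for $d$ collapses immediately to $(d_V\mu)_j$, giving the vertical piece. The $d\zeta$-component is $i_{Z_j}d\mu_j$, which by the magic formula and $i_{Z_j}\mu_j=0$ equals $\L_{Z_j}\mu_j$; a short check using that $[Z_j,u]$ is vertical identifies $\L_{Z_j}\mu_j = (\L_{Z_j}\mu)_j$.

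For the third identity, decompose $v = v_j + f Z_j$ with $f = d\zeta(v)$; since $v$ descends to $\C P^1$, $f$ is basic, so $\L_{fZ_j}\mu_j = f\L_{Z_j}\mu_j + df\w i_{Z_j}\mu_j = f(\L_{Z_j}\mu)_j$. For the remaining piece, the vertical part of $\L_{v_j}\mu_j$ reproduces $\L_{v_j}\mu$ on vertical vectors, and the $d\zeta$-component equals $i_{Z_j}\L_{v_j}\mu_j = -i_{[v_j,Z_j]}\mu_j = (i_{[Z_j,v_j]}\mu)_j$ by Cartan's formula together with $i_{Z_j}\mu_j=0$ and the verticality of $[Z_j,v_j]$. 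Finally, one observes that $(\L_v\mu)_j = (\L_{v_j}\mu)_j + f(\L_{Z_j}\mu)_j$ holds pointwise on vertical vectors because the would-be cross-term $(u_i f)\,\mu(\ldots,Z_j,\ldots)$ drops out ($f$ is basic, so $u_i f = 0$). Adding everything gives the claimed formula.

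The main obstacle is not conceptual but notational: one must consistently distinguish the vertical form $\L_v\mu$ from the full form $\L_v\mu_j$, and verify each bracket that appears is vertical before identifying an extension $(\cdot)_j$ with a Lie-derivative expression. Once this bookkeeping is in place, Cartan's magic formula does all the real work.
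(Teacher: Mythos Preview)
Your proof is correct and follows essentially the same strategy as the paper: decompose forms via the splitting $TZ\vert_{U_j}=T_V\oplus\langle Z_j\rangle$, check the vertical part and the $d\zeta$-part separately, and use Cartan's formula together with the verticality of the brackets $[Z_j,u]$ and $[Z_j,v_j]$. The paper packages this more tersely by observing once that a form on $Z$ is determined by its pullback $\iota^*$ to $\Lambda^\bullet T_V^*$ and by $\iota^* i_{Z_j}$, declares the vertical parts obviously equal, and then for each identity computes only $\iota^* i_{Z_j}$ of the left-hand side (e.g.\ $\iota^* i_{Z_j}\mu_k=-i_{X_{jk}}\mu$ for the first, $\iota^* i_{Z_j}d\mu_j=\L_{Z_j}\mu$ for the second, $\iota^* i_{Z_j}\L_v\mu_j=i_{[Z_j,v_j]}\mu$ for the third). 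Your more explicit route---the multilinear expansion for the first identity and the decomposition $v=v_j+fZ_j$ with the basicness of $f$ for the third---unpacks exactly these computations, so the content is the same.
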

\begin{proof}
All of the equations are easily seen to be true when pulled back to $\Lambda^\bullet T_V^*$, so it is sufficient to check that the equations are true when contracted with $Z_j$ and then pulled back to $\Lambda^{\bullet-1} T_V^*$.  Let $\iota: T_V \to T_Z$ be the inclusion.  We have
\[
\iota^* i_{Z_j} (\mu_k - \mu_j) = \iota^* i_{Z_j} \mu_k = - \iota^* i_{X_{jk}} \mu_k = - i_{X_{jk}} \mu = i_{Z_j} (d\zeta \w i_{X_{jk}} \mu),
\]
proving the first equation.  The second equation follows from
\[
\iota^* i_{Z_j} d \mu_j = \iota^*\left(- d i_{Z_j} \mu_j + \L_{Z_j} \mu_j\right) = \iota^* \L_{Z_j} \mu_j = \L_{Z_j} \mu
\]
and the third from
\[
\iota^* i_{Z_j} \L_v \mu_j = \iota^*\left(\L_v i_{Z_j} \mu_j + [i_{Z_j}, \L_v] \mu_j \right) = \iota^* i_{[Z_j, v]} \mu_j = i_{[Z_j, v_j]} \mu.
\]
\end{proof}

\begin{proposition}
The 1-form
\[
i_{[Y, i\zeta Z_j]} \om + i_{i\zeta T_j} \tilde F - 2 i \zeta A_j \in \A^1_V(U_j)
\]
is (vertically) closed.
\end{proposition}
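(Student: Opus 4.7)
The plan is to compute $d_V \phi_j$ directly, using the identities (\ref{liederiv}), (\ref{liederiv2}), and $d_V A_j = \bar F$, together with the algebraic observation $\partial_\zeta(\zeta \tilde F) = -2\zeta \bar F$, which is immediate from $\tilde F = F/\zeta - \zeta\bar F$ since $F$ and $\bar F$ are independent of $\zeta$.

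First I would simplify $\phi_j$. Since the $\C P^1$-components of $Y$ and $i\zeta Z_j$ both equal $i\zeta\,\partial/\partial\zeta$, the bracket $[Y, i\zeta Z_j]$ is vertical. Applying Cartan's formula $i_{[Y,W]} = [\L_Y, i_W]$ with $W = i\zeta Z_j$ and using (\ref{liederiv}) gives
\[
i_{[Y, i\zeta Z_j]}\om = \L_Y(i_{i\zeta Z_j}\om) - i_{i\zeta Z_j}\tilde F.
\]
Because $\tilde F$ and $\om$ are pulled back from $M$ (so carry no $d\zeta$-component) and $Z_j - T_j = \partial/\partial\zeta$, we have $i_{Z_j}\tilde F = i_{T_j}\tilde F$, so $i_{i\zeta Z_j}\tilde F = i_{i\zeta T_j}\tilde F$ cancels the middle term of $\phi_j$, leaving the cleaner expression
\[
\phi_j = \L_Y(i_{i\zeta Z_j}\om) - 2i\zeta A_j.
\]

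Next I would apply $d_V$. The $A_j$-piece contributes $d_V(-2i\zeta A_j) = -2i\zeta \bar F$. The field $Y = X + i\zeta\,\partial/\partial\zeta$ is projectable to $\C P^1$, so $\L_Y$ commutes with $d_V$ on forms without a $d\zeta$-component, and combining (\ref{liederiv2}) with the decomposition $\L_{Z_j} = \L_{\partial_\zeta} + \L_{T_j}$ yields $\L^V_{T_j}\om = -\om/\zeta - \partial_\zeta \om$; thus
\[
d_V(i_{i\zeta Z_j}\om) = i\zeta\, d_V(i_{T_j}\om) = i\zeta\, \L^V_{T_j}\om = -i\om - i\zeta\, \partial_\zeta \om.
\]
Applying $\L_Y$, using (\ref{liederiv}) and the commutator $[\L_Y, \L_{\partial_\zeta}] = -i\L_{\partial_\zeta}$ (derived from $[Y, \partial_\zeta] = -i\,\partial_\zeta$), gives
\[
\L_Y\bigl(-i\om - i\zeta\, \partial_\zeta \om\bigr) = -i\tilde F - i\zeta\, \partial_\zeta \tilde F = -i\,\partial_\zeta(\zeta \tilde F) = 2i\zeta \bar F,
\]
which exactly cancels the $A_j$-piece, so $d_V \phi_j = 0$.

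The main conceptual step is the simplification $\phi_j = \L_Y(i_{i\zeta Z_j}\om) - 2i\zeta A_j$; after that the argument collapses to the one-line identity $\partial_\zeta(\zeta\tilde F) = -2\zeta\bar F$ built into the very definition of $\tilde F$. The principal technical obstacle is the bookkeeping of vertical versus horizontal components for the non-vertical fields $Y$ and $Z_j$ and the verification that $d_V$ commutes with $\L_Y$ on the relevant forms; both are handled by the observation that $Y$ is projectable to $\C P^1$, so that the vertical correction $T_j$ in $Z_j = \partial/\partial\zeta + T_j$ enters only through the Darboux identity (\ref{liederiv2}).
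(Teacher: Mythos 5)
Your proof is correct and is essentially the same Cartan-calculus argument as the paper's, resting on the same three inputs $\L_Y\om=\tilde F$, $\L_{Z_j}\om=-\frac{1}{\zeta}\om$, and $dA_j=\bar F$. The only organizational difference is that you commute $i_{i\zeta Z_j}$ past $\L_Y$ at the level of the 1-form itself, so that $i_{i\zeta T_j}\tilde F$ cancels before differentiating and the expression collapses to $\L_Y(i_{i\zeta Z_j}\om)-2i\zeta A_j$, whereas the paper applies $d_V$ first and converts $d_V\, i_{[Y,i\zeta Z_j]}\om$ into $[\L_Y,\L_{i\zeta Z_j}]\om$.
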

\begin{proof}
Using (\ref{liederiv}) and (\ref{liederiv2}), we have
\begin{gather*}
d_V i_{[Y, i\zeta Z_j]} \om = \L_{[Y, i\zeta Z_j]} \om = [\L_Y, \L_{i\zeta Z_j}] \om = -i \tilde F - \L_{i\zeta Z_j} \tilde F \\
= -i \tilde F - \L_{i\zeta \frac{\partial}{\partial \zeta}} \tilde F - \L_{i\zeta T_j} \tilde F = -i \tilde F + \frac{i}{\zeta} F + i \zeta \bar F - d_V i_{i\zeta T_j} \tilde F \\
= 2 i \zeta \bar F - d_V i_{i\zeta T_j} \tilde F = d_V (2i \zeta A_j) - d_V i_{i\zeta T_j} \tilde F.
\end{gather*}
\end{proof}

Therefore, shrinking $U_j$ if necessary, we can find $f_j \in C^\infty(U_j)$ such that
\begin{equation}
d_V f_j = i_{[Y, i\zeta Z_j]} \om + i_{i\zeta T_j} \tilde F - 2 i \zeta A_j \in \A^1_V(U_j) \label{def:f}
\end{equation}

\begin{proposition}
The function $i_X i_{X_{jk}} i\zeta \om - f_k + f_j$ is constant on $Z_0$. 
\end{proposition}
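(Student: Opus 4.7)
The plan is to show $d_V g|_{Z_0} = 0$, where $g := i_X i_{X_{jk}}(i\zeta \om) - f_k + f_j$. Since $Z_0 = \{\zeta = 0\}$ is a fiber of $Z \to \C P^1$, tangent vectors to $Z_0$ are precisely the vertical vectors at points of $Z_0$, so this vanishing will imply that $g$ is locally constant on $Z_0 \cap U_j \cap U_k$.

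For the $f$-part I differentiate (\ref{def:f}) and use the identities $Z_j - Z_k = T_j - T_k = -X_{jk}$ to obtain
\[
d_V(f_j - f_k) = -i_{[Y, i\zeta X_{jk}]}\om - i_{i\zeta X_{jk}} \tilde F - 2i\zeta(A_j - A_k).
\]
Writing $h := i_X i_{X_{jk}}(i\zeta \om)$ and $\tilde\om := i\zeta \om$, Cartan's formula gives
\[
dh = i_{[X, X_{jk}]}\tilde \om + i_{X_{jk}} \L_X \tilde \om - i_X \L_{X_{jk}} \tilde \om + i_X i_{X_{jk}} d\tilde \om.
\]
Two simplifications remove the last two terms. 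First, closedness of $\om_I, \om_J, \om_K$ gives $d_V \tilde\om = 0$, so $d\tilde\om = id\zeta \wedge \om$ and the fourth term is a multiple of $d\zeta$ (hence invisible to $d_V$). Second, subtracting (\ref{darb}) for $Z_j$ and $Z_k$ gives $\L_{X_{jk}}\tilde\om = 0$. Hence $d_V h = i_{[X, X_{jk}]}\tilde\om + i_{X_{jk}}\L_X\tilde\om$ as vertical $1$-forms.

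To finish, I evaluate both pieces at $\zeta = 0$, where $\tilde\om|_{\zeta = 0} = \om_+ := \om_J + i\om_K$. Equations (\ref{Xprops}) yield $\L_X \om_+ = i\om_+ + iF$ with $F = F_1 + iF_2$, so $\L_X \tilde\om|_{\zeta=0} = i\om_+ + iF$. Since $X_{jk}$ is vertical, a Lie-bracket computation gives $[Y, i\zeta X_{jk}] = i\zeta[X, X_{jk}] - \zeta X_{jk} + O(\zeta^2)$; pairing this with the simple pole $\frac{1}{i\zeta}\om_+$ of $\om$ produces the finite residue
\[
(i_{[Y, i\zeta X_{jk}]}\om)|_{\zeta=0} = i_{[X,\, X_{jk}|_{\zeta=0}]} \om_+ + i \cdot i_{X_{jk}|_{\zeta=0}} \om_+.
\]
The same mechanism gives $(i_{i\zeta X_{jk}}\tilde F)|_{\zeta=0} = i \cdot i_{X_{jk}|_{\zeta=0}} F$, while the $A_j - A_k$ term vanishes at $\zeta = 0$. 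Adding $d_V h|_{Z_0}$ to $d_V(f_j - f_k)|_{Z_0}$, every contribution cancels pairwise; the crucial cancellation is between the $iF$-piece in $\L_X \tilde\om|_{\zeta=0}$ and the residue of $-i_{i\zeta X_{jk}}\tilde F$.

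The main obstacle is the pole bookkeeping at $\zeta = 0$: the apparent vanishing of $[Y, i\zeta X_{jk}]$ and $i\zeta X_{jk}$ is compensated by the simple poles of $\om$ and $\tilde F$, so all three non-$A$ terms in $d_V(f_j - f_k)$ contribute nontrivially at $Z_0$. The $F$-matching cancellation is the essential new feature compared to the $S^1$-symmetric ($F = 0$) case.
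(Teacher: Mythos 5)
Your proof is correct and follows essentially the same route as the paper: apply Cartan's formula to $i_Xi_{X_{jk}}(i\zeta\om)$, use (\ref{darb}) to kill $\L_{X_{jk}}(i\zeta\om)$, and do the residue bookkeeping of $[Y,i\zeta X_{jk}]$ and $i\zeta\tilde F$ against the simple poles of $\om$ and $\tilde F$ at $\zeta=0$, with the $F$-terms cancelling against $\L_X(\om_J+i\om_K)$. The only cosmetic difference is that the paper reduces both $d(f_k-f_j)$ and $d(i_Xi_{X_{jk}}i\zeta\om)$ separately to a common expression (routing $\L_X(i\zeta\om)$ through $\L_Y\om=\tilde F$) rather than cancelling all terms in one combined sum.
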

\begin{proof}
On $Z_0$ we have
\begin{align*}
d(f_k - f_j) &= i_{[X+i\zeta\frac{\partial}{\partial\zeta}, i\zeta X_{jk}]} \om + i_{i\zeta X_{jk}}\tilde F \\
&=  i_{[X, X_{jk}]} i\zeta \om - i_{X_{jk}} \zeta \om + i_{i\zeta X_{jk}}\tilde F \\
&= i_{[X, X_{jk}]} i\zeta \om + i i_{X_{jk}} (\om_J + i \om_K) + i_{i\zeta X_{jk}}\tilde F
\end{align*}
and
\begin{align*}
d i_X i_{X_{jk}} i \zeta \om &= \L_X i_{X_{jk}} i \zeta \om - i_X d i_{X_{jk}} i \zeta \om \\
&= i_{X_{jk}} \L_X i \zeta \om + i_{[X, X_{jk}}] i \zeta \om - i_X \L_{X_{jk}} i \zeta \om \\
&= i_{X_{jk}} \L_X i \zeta \om + i_{[X, X_{jk}}] i \zeta \om \\
&= i\zeta i_{X_{jk}} \L_{Y- i \zeta \frac{\partial}{\partial \zeta}} \om + i_{[X, X_{jk}]} i \zeta \om \\
&= i \zeta i_{X_{jk}} \tilde F + \zeta^2 i_{X_{jk}} \L_{\frac{\partial}{\partial \zeta}}\left(\frac{1}{i\zeta}(\om_J + i \om_K)\right) +  i_{[X, X_{jk}]} i \zeta \om \\
&= i \zeta i_{X_{jk}} \tilde F + i i_{X_{jk}} (\om_J + i \om_K) + i_{[X, X_{jk}]} i \zeta \om.
\end{align*}

\end{proof}
Thus $i_X i_{X_{jk}} i\zeta \om - f_k + f_j$ is a C\v ech 1-cocycle with values in the constant sheaf $\C$.  By our assumption that $\pi_1 M = 0$, we have
\begin{corollary}\label{cor:vanish}
We can choose the $f_j$ so that the function $i_X i_{X_{jk}} i\zeta \om - f_k + f_j$ vanishes on $Z_0$.
\end{corollary}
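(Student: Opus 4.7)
The plan is to use the simple connectivity of $M$ (equivalently of $Z_0$) to kill the \v Cech 1-cocycle obtained in the previous proposition by adjusting each $f_j$ by a suitable constant.

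By the preceding proposition, the collection $g_{jk} := i_X i_{X_{jk}} i\zeta \om - f_k + f_j$ defines a \v Cech 1-cocycle with values in the constant sheaf $\underline{\C}$ on $Z_0$, relative to the cover $\{U_j \cap Z_0\}$. After refining $\{U_j\}$ to a good cover if necessary (which is harmless, since the local data $Z_j, A_j$ on any $U_j$ restricts to its refinements), \v Cech cohomology computes sheaf cohomology, and our assumption $\pi_1 M = 0$ together with $Z_0 \cong M$ gives $H^1(Z_0; \C) = 0$. Hence the cocycle is a coboundary: there exist constants $c_j \in \C$ on each connected $U_j \cap Z_0$ such that $g_{jk} = c_k - c_j$ on $U_j \cap U_k \cap Z_0$.

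The next step is to observe that equation (\ref{def:f}) determines each $f_j$ only up to a function in the kernel of $d_V$ on $U_j$, i.e., up to a function pulled back from $\C P^1$. In particular, adding a constant to $f_j$ preserves (\ref{def:f}). Replacing $f_j$ by $f_j + c_j$ (interpreting $c_j$ as a constant function on $U_j$) therefore leaves the defining property of $f_j$ intact and changes the cocycle on $Z_0$ to
\[
i_X i_{X_{jk}} i\zeta \om - (f_k + c_k) + (f_j + c_j) = g_{jk} - (c_k - c_j) = 0,
\]
as required.

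The only potentially delicate point is the identification of the \v Cech cohomology with singular cohomology, which is why we pass to a good cover; everything else is immediate from the simple connectivity hypothesis and the ambiguity in solving the equation $d_V f_j = \cdots$. I expect no further obstacles.
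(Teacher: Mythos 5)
Your argument is correct and is exactly the one the paper intends (the paper compresses it to a single sentence invoking $\pi_1 M = 0$): the previous proposition makes $g_{jk}$ a \v Cech $1$-cocycle for the constant sheaf on $Z_0\simeq M$, simple connectivity kills $H^1(M;\C)$, and the residual freedom in solving $d_V f_j = \cdots$ lets you absorb the coboundary $c_k - c_j$ into the $f_j$. Your attention to the good-cover refinement and to the fact that the correction must lie in $\ker d_V$ on all of $U_j$ (a constant does) fills in the details the paper omits.
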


We are now able to define the 1-forms $\phi_j$ that will define the Lie algebroid.
\begin{definition}
Let
\[
\phi_j = i_Y \om_j - \frac{f_j}{i\zeta} d\zeta + 2 \zeta A_j \in \A^{1,0}_Z(U_j).
\]
\end{definition}

\begin{proposition}\label{prop:phij}
The form $\phi_k - \phi_j$ is non-singular at $Z_0$ and we have
\[
d\phi_j = i_Y \left( d\zeta \w \frac{1}{\zeta} \om \right) + \frac{1}{\zeta} F + \zeta \overline F.
\]
\end{proposition}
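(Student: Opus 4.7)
The plan is to verify the two assertions separately, working throughout in the holomorphic splitting determined by $Z_j$. For non-singularity of $\phi_k - \phi_j$ at $Z_0$, Lemma~\ref{lemma:split} gives $\om_k - \om_j = -d\zeta \w i_{X_{jk}}\om$, which yields
\[
\phi_k - \phi_j = -i\zeta\, i_{X_{jk}}\om + \frac{1}{i\zeta}\bigl[i_X i_{X_{jk}}(i\zeta\om) - (f_k - f_j)\bigr]\, d\zeta + 2\zeta(A_k - A_j)
\]
after using $i_Y d\zeta = i\zeta$ and that the $i\zeta\partial_\zeta$ piece of $Y$ pairs trivially with vertical 1-forms. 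Corollary~\ref{cor:vanish} ensures the bracketed quantity vanishes on $Z_0$, hence is divisible by $\zeta$, making the $d\zeta$-coefficient regular at $\zeta = 0$.

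For the formula for $d\phi_j$, I would expand
\[
d\phi_j = d(i_Y\om_j) - d\bigl(f_j/(i\zeta)\bigr) \w d\zeta + 2\,d\zeta \w A_j + 2\zeta\bar F.
\]
Writing $i_Y\om_j = (i_{Y_j}\om)_j$ with $Y_j = X - i\zeta T_j$ the vertical part of $Y$ in splitting $j$, Lemma~\ref{lemma:split} gives
\[
d(i_Y\om_j) = (d_V i_{Y_j}\om)_j - \frac{1}{\zeta}\,d\zeta \w i_Y\om_j + d\zeta \w (i_{[Z_j, Y_j]}\om)_j.
\]
The central computation is the identity
\[
d_V(i_{Y_j}\om) = \L_X\om - i\zeta\,\L_{T_j}\om = \tilde F + i\om,
\]
which follows directly from $\L_Y\om = \tilde F$, $\L_{Z_j}\om = -\om/\zeta$, and the decompositions $\L_X = \L_Y - i\zeta\L_{\partial_\zeta}$, $\L_{T_j} = \L_{Z_j} - \L_{\partial_\zeta}$ acting on vertical forms (where $i\zeta$ is fiberwise constant). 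Together with $\tilde F_j + 2\zeta\bar F = F/\zeta + \zeta\bar F$, this accounts for the $i\om$ and $F/\zeta + \zeta\bar F$ pieces of the RHS.

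It then remains to verify that the residual $d\zeta$-wedge coefficient $(i_{[Z_j, Y_j]}\om)_j + \frac{1}{i\zeta}(d_V f_j)_j + 2A_j$ collapses correctly. Using the commutator identity $[Z_j, Y_j] = -\frac{1}{i\zeta}[Y, i\zeta Z_j]$ (which follows from $[Z_j, i\zeta Z_j] = iZ_j$) together with the defining equation~(\ref{def:f}) of $f_j$, this sum reduces on $T_V$ to $i_{T_j}\tilde F$. This is cancelled exactly by the discrepancy between the splitting-$j$ extensions and the smooth-splitting (pullback) extensions of the $F$-terms: one checks $F_j - p^*F = d\zeta \w \gamma^F$ with $\iota^*\gamma^F = -i_{T_j}F$, and similarly for $\bar F$. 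The main obstacle throughout is precisely this bookkeeping across two distinct extensions of vertical forms to $Z$; the choice~(\ref{def:f}) of $f_j$ is engineered so that all splitting-dependent $d\zeta$-terms cancel upon pullback to $T_V$ via $\iota$.
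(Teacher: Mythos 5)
Your proposal is correct and follows the paper's own proof in all essentials: the same use of Lemma~\ref{lemma:split} and Corollary~\ref{cor:vanish} to cancel the singular $d\zeta$-coefficient of $\phi_k-\phi_j$ at $Z_0$, and the same combination of (\ref{liederiv}), (\ref{liederiv2}), (\ref{def:f}) and the commutator identity $[Z_j,Y_j]=-\tfrac{1}{i\zeta}[Y,i\zeta Z_j]$ to produce $d\phi_j$. The only difference is organizational: you apply the second identity of Lemma~\ref{lemma:split} to $(i_{Y_j}\om)_j$ and run the Cartan calculus fiberwise via $d_V(i_{Y_j}\om)=\tilde F+i\om$, whereas the paper writes $d(i_Y\om_j)=\L_Y\om_j-i_Y d\om_j$ and invokes the third identity of the lemma; the bookkeeping of the two extensions of vertical forms is identical in both versions.
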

\begin{proof}
From lemma \ref{lemma:split} and corollary \ref{cor:vanish}, at $Z_0$ we have
\begin{align*}
\phi_k - \phi_j &= -i_Y \left(d\zeta \w i_{X_{jk}} \om\right) - i_X i _{X_{jk}} \om d\zeta \\
&= -i \zeta i_{X_{jk}} \om + d\zeta \w i_X i_{X_{jk}} \om - i_X i _{X_{jk}} \om d\zeta \\
&= -i \zeta i_{X_{jk}} \om.
\end{align*}
is non-singular.  

We use lemma \ref{lemma:split} and (\ref{liederiv2}) to compute
\begin{align*}
d(i_Y \om_j) &= \L_Y \om_j - i_Y d\om_J \\
&= (\L_Y \om)_j + d\zeta \w i_{[Z_j, Y_j]} \om - i_Y\left(d\zeta \w \L_{Z_j} \om\right) \\
&= (\tilde F)_j + d\zeta \w i_{[Z_j, Y_j]} \om + i_Y\left(d\zeta \w \frac{1}{\zeta} \om\right)
\end{align*}
while
\begin{align*}
d\left(\frac{-f_j}{i\zeta} d\zeta + 2\zeta A_j\right) &=  -\left(\frac{1}{i\zeta} i_{[Y, i \zeta Z_j]} \om + i_{T_j} \tilde F - 2 \iota^* A_j\right)\w d\zeta + 2 d\zeta \w A_j + 2 \zeta \bar F \\
&= d\zeta \w i_{[Y_j, Z_j]} +d\zeta \w i_{T_j} \tilde F + 2 \zeta \bar F.
\end{align*}
Since $\tilde F = (\tilde F)_j + d\zeta \w i_{Z_j} \tilde F$, summing these gives
\[
d\phi_j = i_Y\left(d\zeta \w \frac{1}{\zeta} \om\right) + \tilde F + 2\zeta \bar F = i_Y\left(d\zeta \w \frac{1}{\zeta} \om\right) + \frac{1}{\zeta} F + \zeta \bar F
\]
as desired.

\end{proof}

\begin{corollary}
The forms $\phi_k - \phi_j$ give a cocycle of holomorphic 1-forms on $U$ and therefore defines a holomorphic Lie algebroid on $U$.
\end{corollary}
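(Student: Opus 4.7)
The plan is to verify the three defining properties listed earlier, namely (1) $d\phi_j = d\phi_k$, (2) non-singularity of the differences on $Z_0$, and (3) reality (for later extension across $Z_\infty$), and then to invoke the standard dictionary between $H^1(d\mathcal O_Z)$ and holomorphic Lie algebroid extensions of $T^{1,0}Z$ by $\mathcal O_Z$ to conclude.

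First I observe that by construction $\phi_j \in \A^{1,0}_Z(U_j)$, so each difference $\phi_k - \phi_j$ is of type $(1,0)$. Proposition \ref{prop:phij} shows that $d\phi_j$ has a closed expression $i_Y(d\zeta\w \tfrac1\zeta \om) + \tfrac1\zeta F + \zeta\overline F$ which is manifestly independent of the index $j$; hence $d(\phi_k - \phi_j) = 0$ on $U_j \cap U_k$. A closed $(1,0)$-form is automatically $\bar\partial$-closed, so the coefficients of $\phi_k - \phi_j$ are holomorphic; combined with the non-singularity on $Z_0$ already established in Proposition \ref{prop:phij}, each $\phi_k - \phi_j$ is a genuine closed holomorphic $1$-form on the overlap $U_j \cap U_k \subset U$.

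The cocycle condition $(\phi_k - \phi_j) + (\phi_\ell - \phi_k) + (\phi_j - \phi_\ell) = 0$ on triple overlaps is immediate from the definition, so $\{\phi_k - \phi_j\}$ defines an element of the \v{C}ech cohomology group $H^1(U, d\mathcal O_Z)$. To pass from this cohomology class to a holomorphic Lie algebroid on $U$, I invoke the standard correspondence (used in \cite{Hitchin:2012}): an extension
\[
0 \to \mathcal O_Z \to E \to T^{1,0}Z \to 0
\]
of holomorphic Lie algebroids is determined up to isomorphism by a class in $H^1(d\mathcal O_Z)$, where the local data are exactly closed holomorphic $1$-forms whose pairwise differences give the transition data for a connection on a (formal) line bundle. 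Concretely, on each $U_j$ one takes $E|_{U_j} := \mathcal O_{U_j} \oplus T^{1,0}U_j$ with anchor given by projection and Lie bracket $[(f,v),(g,w)] = (v(g)-w(f) + \phi_j([v,w]) - d\phi_j(v,w),\,[v,w])$, and glues along overlaps using the closed $1$-form $\phi_k - \phi_j$. The cocycle and closedness conditions verified above are exactly what is needed for the glueing to produce a well-defined holomorphic Lie algebroid extension on $U$.

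The only nontrivial point, and the one that already required genuine work in Proposition \ref{prop:phij}, is the non-singularity of the differences at $Z_0$: the individual $\phi_j$ have a simple pole along $\{\zeta=0\}$ coming from the $\tfrac{f_j}{i\zeta}d\zeta$ term and from the $\tfrac{1}{\zeta}$ pole in $\om$, and it is the vanishing of $i_X i_{X_{jk}} i\zeta\om - f_k + f_j$ on $Z_0$ (corollary \ref{cor:vanish}, which in turn used $\pi_1(M)=0$) that cancels these singularities in the difference. With that cancellation in hand, everything else is formal: closedness, $(1,0)$-type, and the cocycle identity combine to place $\{\phi_k-\phi_j\}$ in $H^1(U, d\mathcal O_Z)$, and the correspondence recalled above then yields the claimed holomorphic Lie algebroid on $U$.
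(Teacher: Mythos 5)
Your proposal is correct and matches the paper's (essentially immediate) reasoning: the corollary follows from Proposition \ref{prop:phij} because $d\phi_j$ is independent of $j$, so the differences $\phi_k-\phi_j$ are closed $(1,0)$-forms, hence holomorphic, non-singular at $Z_0$, and trivially satisfy the cocycle identity, giving a class in $H^1(d\O_Z)$ that defines the Lie algebroid. The only blemish is your local bracket formula involving the singular $\phi_j$ and $d\phi_j$ on all of $U_j$; the standard construction uses the trivial extension $\O_{U_j}\oplus T^{1,0}U_j$ with the ordinary bracket and glues via $(f,v)\mapsto(f+(\phi_k-\phi_j)(v),v)$, which is all that the closed non-singular transition forms are needed for.
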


It is straightforward to check that $Y$ and $\om$ are invariant under $\overline \tau^*$ while $d\zeta/\zeta$ and $\frac{1}{\zeta} F + \zeta \overline F$ each pick up a factor of $-1$.  Thus from the above proposition, we have $\overline{\tau^*}(d\phi_j) = - d\phi_j$.  Therefore we can extend this Lie algebroid to all of $Z$ by using the cocycle  $\{\phi_k - \phi_j, \overline{\tau^*}(\phi_k) - \phi_j, \overline{\tau^*}(\phi_j) - \overline{\tau^*}(\phi_k) \}$ relative to the cover $\{U_j\} \cup \{\tau(U_j)\}$ of $Z$.  Then
$\{\phi_j, -\overline{\tau^*}(\phi_k)\}$ gives a connection with singularities at $\zeta = 0, \zeta = \infty$ of curvature $i_Y \left( d\zeta \w \frac{1}{\zeta} \om \right) + \frac{1}{\zeta} F + \zeta \overline F$.

\subsection{Computation of the Atiyah class}
We now show that the Lie algebroid constructed in the previous section is the same one determined by the (1,1)-form $2 i \om_I - 2 i d(I\alpha)$.  Any holomorphic Lie algebroid on $Z$ has an Atiyah class in $H^1(\Om^1_Z) = H^{1,1}(Z)$ coming from the long exact sequence in cohomology associated to the short exact sequence of sheaves
\[
0 \to d\O_Z \to \Om^1_Z \to d \Om^1_Z \to 0.
\]
Since $H^0(d\Om_Z^1) = 0$ for twistor space \cite{Hitchin:2012}, the map $H^1(d\O_Z) \to H^1(\Om^1_Z) \simeq H^{1,1}(Z)$ is injective so that a holomorphic Lie algebroid is completely determined by its Atiyah class in $H^{1,1}(Z)$.  We will prove
\begin{theorem}\label{thm:chernclass}
The Atiyah class of the holomorphic Lie algebroid defined in the previous section is
\[
2 i \om_I - 2 i d(I\alpha) \in H^{1,1}(Z).
\]
\end{theorem}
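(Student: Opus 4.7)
The strategy is to use the \v Cech--Dolbeault correspondence. Since $H^0(d\Omega^1_Z) = 0$ on twistor space, the map $H^1(d\mathcal O_Z) \hookrightarrow H^1(\Omega^1_Z) \cong H^{1,1}_{\bar\partial}(Z)$ is injective, so the Atiyah class is determined by any smooth $(1,1)$-form obtained via a smooth trivialization of the \v Cech cocycle. I will produce smooth $(1,0)$-forms $\Phi_j$ on $U_j$ (and analogous $\Phi^-_j$ on $\tau(U_j)$) with $\Phi_k - \Phi_j = \phi_k - \phi_j$, after which $\bar\partial \Phi_j$ glues to a smooth global $(1,1)$-form $\Theta$ whose Dolbeault class equals the Atiyah class, and then show $\Theta$ is cohomologous to $2i\om_I - 2id(I\alpha)$.

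My first step is to compute $\bar\partial \phi_j$ directly from Proposition \ref{prop:phij}. Unpacking
\[
d\phi_j = i_Y\!\left(d\zeta\wedge \tfrac{1}{\zeta}\om\right) + \tfrac{1}{\zeta}F + \zeta\overline F = i\om - \tfrac{1}{\zeta}d\zeta\wedge i_X\om + \tfrac{1}{\zeta}F + \zeta\overline F,
\]
I separate the $(p,q)$-types on $Z$ using three observations: (i) $\om$ is of type $(2,0)$ on $Z$ by construction of the twistor complex structure, (ii) $i_X\om$ is vertical of type $(1,0)$ because $\om$ is $(2,0)$ and $X$ is real (so $i_X\om = i_{X^{1,0}_\zeta}\om$), and (iii) $F,\overline F$ are of type $(1,1)$ on $Z$ since $F_1,F_2$ are $(1,1)$ in every complex structure. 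The $(1,1)$-part of $d\phi_j$ is then isolated as
\[
\bar\partial \phi_j = \tfrac{1}{\zeta} F + \zeta\overline F \quad\text{on } U\setminus Z_0,
\]
which is in particular independent of $j$ and globally defined there. The analogous computation on the $\tau$-conjugate side yields the same expression on $\tau(U)\setminus Z_\infty$, using that $\overline{\tau^*}$ preserves type decomposition and commutes with $\bar\partial$.

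Next I would smooth the singularity at $Z_0$. By Corollary \ref{cor:vanish} the normalization of $f_j$ can be arranged so that the leading $\zeta^{-1}$-singular part of $\phi_j$ near $Z_0$ is $j$-independent; I then subtract from each $\phi_j$ a globally defined $(1,0)$-form $\eta$ on $U$ (with a simple pole at $Z_0$) matching this leading singularity, so that $\Phi_j := \phi_j - \eta$ extends smoothly across $Z_0$ while preserving the cocycle $\Phi_k - \Phi_j = \phi_k - \phi_j$. A parallel construction on $\tau(U)$ yields $\Phi^-_j$; condition $3$ of the twistor construction ($\overline{\tau^*}(d\phi_j) = -d\phi_j$) together with the reality of $F_1,F_2$ ensure that the two smoothings agree on the overlap, producing a global smooth $(1,0)$-trivialization.

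The final step is to identify $[\Theta] = 2i[\om_I - d(I\alpha)]$ in $H^{1,1}(Z)$. Since $\bar\partial\Phi_j = \tfrac{1}{\zeta}F + \zeta\overline F - \bar\partial\eta$, this reduces to computing $\bar\partial\eta$ and simplifying via the hyperholomorphic equations (\ref{alphaprops}): the identities $d(J\alpha) = \om_J + F_1$ and $d(K\alpha) = \om_K + F_2$ should precisely produce the $\om_J + i\om_K$ terms that absorb the $\zeta^{\pm 1}$ poles of $F, \overline F$, while $d\alpha = 0$ together with the fact that $I_0 = I$ --- so that $J\alpha + iK\alpha$ is of type $(1,0)$ in the fiber complex structure at $\zeta = 0$ --- pins down the $2i\om_I - 2id(I\alpha)$ contribution. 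The main obstacle is this last step: the naive choice $\eta = \zeta^{-1}(J\alpha + iK\alpha + if_0\, d\zeta)$ is $(1,0)$ in $I_\zeta$ only at $\zeta = 0$, so one must use a $\zeta$-dependent refinement whose Laurent expansion realizes the $(1,0)$-in-$I_\zeta$ projection of the singular piece of $\phi_j$, and the resulting Nijenhuis-style bookkeeping --- closely analogous to the proof of Theorem \ref{thm:mainX} --- is where the argument has its real content.
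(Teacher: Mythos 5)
Your outline coincides with the paper's strategy---use the injection $H^1(d\O_Z)\hookrightarrow H^{1,1}(Z)$, note $\bar\partial\phi_j = \tfrac{1}{\zeta}F + \zeta\overline F$, and subtract a global singular $(1,0)$-form matching the residues of the $\phi_j$---but the proposal stops exactly where the proof happens. You correctly observe that the naive $\eta = \zeta^{-1}(J\alpha + iK\alpha + if_0\,d\zeta)$ is of type $(1,0)$ in $I_\zeta$ only at $\zeta=0$ and that a ``$\zeta$-dependent refinement'' is needed, but you do not produce it, and you explicitly defer the ``Nijenhuis-style bookkeeping'' that you yourself identify as the real content. The refinement is an explicit closed-form expression, $A = \tfrac{1}{\zeta}(J\alpha+iK\alpha) + 2iI\alpha + \zeta(J\alpha-iK\alpha)$: the standard twistor combination (the analogue for $I\alpha$ of the formula for $\om$), which is of type $(1,0)$ in $I_\zeta$ for \emph{every} $\zeta$ and satisfies $\overline{\tau^*}(A) = -A$, so a single form handles both poles rather than requiring separate smoothings at $Z_0$ and $Z_\infty$ with an overlap check. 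Without this formula your final step cannot be carried out: the class $2i\om_I - 2id(I\alpha)$ emerges from computing $dA$, using $d(J\alpha)=\om_J+F_1$ and $d(K\alpha)=\om_K+F_2$ to cancel the $\zeta^{\pm1}$ poles of $\tfrac{1}{\zeta}F+\zeta\overline F$, and then using that $\bigl(\tfrac{1}{\zeta}(\om_J+i\om_K)+2i\om_I+\zeta(\om_J-i\om_K)\bigr)_{1,1} = -(2i\om_I)_{1,1}$ because $\om$ is of type $(2,0)$.

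A second, unaddressed gap concerns the $d\zeta$-component of the residue. Your candidate $\eta$ carries $\tfrac{if_0}{\zeta}d\zeta$, but the $f_j$ are only locally defined primitives; to obtain a globally defined singular form one must prove that on $Z_0$ the functions $if_j - (J\alpha+iK\alpha)(T_j)$ glue to a single global function $\mu$ satisfying $d\mu = 2\alpha$. This is a proposition in its own right, resting on Corollary \ref{cor:vanish} (hence on $\pi_1 M=0$) and on a computation with (\ref{darb}) showing $\L_{Z_j}(\om_J+i\om_K)=-2i\om_I$. The identity $d\mu=2\alpha$ is then precisely what kills the leftover term $\tfrac{1}{\zeta}d\zeta\wedge\bigl(\tfrac{1}{\zeta}(J\alpha+iK\alpha)+2\alpha-\zeta(J\alpha-iK\alpha)\bigr)_{0,1}$ at the end, since that bracket equals $i$ times the twistor $(1,0)$-form built from $I\alpha$ and so has vanishing $(0,1)$-part. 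Neither ingredient appears in your write-up, so as it stands the proposal is a correct plan with the two decisive constructions missing.
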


To compute this characteristic class, we find a (1,0) form $A - \frac{\mu}{\zeta} d\zeta$ whose residues at $\zeta = 0$ and $\zeta = \infty$ agree with those of $\phi_j$ and $\phi_{\tilde j}$, respectively.  It then follows that the Atiyah class of this Lie algebroid in $H^{1,1}(Z)$ is given by
\[
\bar\partial \phi_j - \bar\partial\left(A - \frac{\mu}{\zeta} d\zeta\right).
\]

It is straightforward to verify
\begin{equation}
i_X \om_J = -K\alpha, ~~ i_X \om_K = J\alpha \label{interior},
\end{equation}
from which we see that the singular part of $\phi_j$ at $\zeta = 0$ is
\[
(\zeta\phi_j)\vert_{\zeta = 0} = (J\alpha + iK\alpha)_j + i f_j d\zeta.
\]

If $\gamma$ is a vertical differential form, we will abuse notation and write $\gamma$ for the corresponding differential form on $Z$ obtained via the global $C^\infty$ splitting $TZ = TM \times T\C P^1$.  Then for any $\gamma \in \A^1(M)$ we have
\[
\gamma - \gamma_j = \gamma(T_j) d\zeta.
\]

Therefore in terms of the $C^\infty$ splitting we have

\begin{equation}
(\zeta\phi_j)\vert_{\zeta = 0} = J\alpha + i K\alpha + (if_j\vert_{Z_0} - (J\alpha + i K\alpha)(T_j)) d\zeta. \label{resCinfty}
\end{equation}

\begin{proposition}
On $Z_0 \simeq M$, the functions $if_j - (J\alpha + i K\alpha)(T_j)$ piece together to a global function $\mu$ satisfying
\[
d\mu = 2\alpha.
\]
\end{proposition}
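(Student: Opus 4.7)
The plan is to verify the two assertions separately: first, that $\mu_j := if_j - (J\alpha + iK\alpha)(T_j)$ is independent of $j$ on $Z_0$, so it defines a global function $\mu$; and second, that this function satisfies $d\mu = 2\alpha$. Both parts hinge on exploiting the formulas already established in Corollary \ref{cor:vanish} and Proposition \ref{prop:phij}.

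For the well-definedness, I would use Corollary \ref{cor:vanish}, which gives $f_k - f_j = i_X i_{X_{jk}}(i\zeta \om)$ on $Z_0$. Since $(i\zeta\om)\vert_{\zeta=0} = \om_J + i\om_K$ and $i_X(\om_J + i\om_K) = -K\alpha + iJ\alpha = i(J\alpha + iK\alpha)$ by (\ref{interior}), this simplifies to $f_k - f_j = -i(J\alpha + iK\alpha)(X_{jk})$. Because $T_k - T_j = Z_k - Z_j = X_{jk}$, multiplying by $i$ yields $if_k - if_j = (J\alpha + iK\alpha)(T_k - T_j)$, so the two local expressions defining $\mu$ agree on $U_j \cap U_k \cap Z_0$.

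For the differential identity, I would extend $J\alpha + iK\alpha$ and $\mu$ from $M$ to a neighborhood of $Z_0$ in $Z$ by pullback along the projection $Z \to M$ (so they carry no $\zeta$-dependence) and then match the coefficient of $\zeta^{-1}$ in the Laurent expansion of $d\phi_j$ at $\zeta = 0$ computed in two ways. On one side, equation (\ref{resCinfty}) gives $\phi_j = \zeta^{-1}(J\alpha + iK\alpha + \mu\, d\zeta) + O(1)$; differentiating and invoking $d(J\alpha + iK\alpha) = (\om_J + i\om_K) + F$ from (\ref{alphaprops}) yields the $\zeta^{-1}$ coefficient $(\om_J + i\om_K) + F + d\mu \w d\zeta$. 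On the other side, Proposition \ref{prop:phij} expresses $d\phi_j = i_Y(d\zeta \w \om/\zeta) + F/\zeta + \zeta\bar F$; a direct calculation using $i_X\om_I = \alpha$, $i_X\om_J = -K\alpha$, $i_X\om_K = J\alpha$ gives $i_Y \om = \zeta^{-1}(J\alpha + iK\alpha) + 2\alpha - \zeta(J\alpha - iK\alpha)$, from which the $\zeta^{-1}$ coefficient becomes $(\om_J + i\om_K) + F - 2\, d\zeta \w \alpha$. Equating these forces $d\mu \w d\zeta = 2\alpha \w d\zeta$; contracting with $\partial/\partial \zeta$ gives the desired $d\mu = 2\alpha$ on $M$.

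The main obstacle I expect is not computational but foundational: carefully formulating the Laurent expansion of $\phi_j$ near $\zeta = 0$ so that coefficient-matching is rigorous, and confirming that pulling back $\mu$ and $J\alpha + iK\alpha$ from $M$ to a neighborhood of $Z_0$ does not produce $\zeta$-dependent corrections that would contaminate the $\zeta^{-1}$ coefficient of $d\phi_j$. Once this bookkeeping is in place, the remaining steps are routine applications of Cartan calculus and the hyperk\"ahler structure equations.
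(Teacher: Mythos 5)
Your first half (gluing the local functions into a global $\mu$) is the same argument the paper gives: Corollary \ref{cor:vanish} plus $i_X(\om_J+i\om_K)=i(J\alpha+iK\alpha)$ and $T_k-T_j=X_{jk}$, and your signs check out. For the identity $d\mu=2\alpha$ you take a genuinely different route. The paper differentiates the defining relation (\ref{def:f}) for $f_j$ directly on $Z_0$, using $[Y,i\zeta Z_j]=i\zeta[X,Z_j]-\zeta T_j$ to first order and the consequence $\L_{Z_j}(\om_J+i\om_K)=-2i\om_I$ of (\ref{darb}); everything is Cartan calculus on $M$. You instead read off the $\zeta^{-1}$ Laurent coefficient of $d\phi_j$ in two ways, from (\ref{resCinfty}) and from Proposition \ref{prop:phij}. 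Your algebra on the Proposition \ref{prop:phij} side is right: $i_Y\om=\tfrac{1}{\zeta}(J\alpha+iK\alpha)+2\alpha-\zeta(J\alpha-iK\alpha)$, so the $\zeta^{-1}$ coefficient is $\om_J+i\om_K+F-2\,d\zeta\w\alpha$, and matching against $\om_J+i\om_K+F+d\mu\w d\zeta$ gives the claim. Your route is arguably cleaner in that it avoids touching $A_j$, $\tilde F$ and the bracket expansion again, at the price of leaning entirely on Proposition \ref{prop:phij} (which is legitimate --- that proposition is established independently, so there is no circularity).

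The one point you flag as ``bookkeeping'' deserves to be treated as a real step, because as stated your extraction of the $\zeta^{-1}$ coefficient is not yet justified. Writing $\zeta\phi_j=\Psi_0+\zeta\Psi_{10}+\bar\zeta\Psi_{01}+O(|\zeta|^2)$ in the product splitting, one finds
\[
d\phi_j=-\tfrac{1}{\zeta^2}\,d\zeta\w\Psi_0+\tfrac{1}{\zeta}\bigl(d\Psi_0+d\bar\zeta\w\Psi_{01}\bigr)-\tfrac{\bar\zeta}{\zeta^2}\,d\zeta\w\Psi_{01}+O(1),
\]
since the two $d\zeta\w\Psi_{10}$ contributions cancel. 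So the $\zeta^{-1}$ coefficient is $d\Psi_0$ only up to terms coming from the antiholomorphic first-order term $\Psi_{01}$; $\zeta\phi_j$ is merely smooth near $Z_0$, not a priori holomorphic in $\zeta$, so $\Psi_{01}$ need not vanish for free. The gap is fillable from what you already have: the right-hand side of Proposition \ref{prop:phij} is a genuine Laurent series in $\zeta$ with no $d\bar\zeta$-component in its singular part, so comparing the $\bar\zeta\zeta^{-2}\,d\zeta\w(\cdot)$ and $\zeta^{-1}\,d\bar\zeta\w(\cdot)$ pieces first forces the contamination from $\Psi_{01}$ out of the $d\zeta\w(\text{horizontal})$ component you need, after which your coefficient comparison gives $d\mu\w d\zeta=2\alpha\w d\zeta$ and hence $d\mu=2\alpha$. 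With that observation made explicit, your proof is complete.
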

\begin{proof}
From corollary \ref{cor:vanish}, the difference of these functions is
\[
i i_X i_{X_{jk}} i \zeta \om - (J\alpha + i K\alpha)(X_{jk}).
\]

By (\ref{interior}), the above becomes
\[
i i_X i_{X_{jk}} i \zeta \om + i i_{X_{jk}} i_X (\om_J + i \om_K) = 0
\]
on $Z_0$.

Now, up to first order in $\zeta$, $[Y, i\zeta Z_j] = i\zeta[X,Z_j] - \zeta T_j$.  So from (\ref{def:f}) on $Z_0$ we have
\begin{align*}
d f_j &= i_{[X,Z_j]} (\om_J + i \om_K) + i i_{T_j} (\om_J + i\om_K) + i i_{T_j} F \\
&=  i_{[X,T_j]} (\om_J + i \om_K) + i i_{T_j} (\om_J + i\om_K + F) \\
&= i_{[X,T_j]} (\om_J + i \om_K) + i i_{T_j} d(J\alpha + i K\alpha) \\
&= i_{[X,T_j]} (\om_J + i \om_K) - i d i_{T_j} d(J\alpha + i K\alpha) + i \L_{T_j}(J\alpha + iK\alpha) \\
&= i_{[X,T_j]} (\om_J + i \om_K) - i d i_{T_j} d(J\alpha + i K\alpha) + \L_{T_j} i_X (\om_J + \om_K) \\
&= (i_{[X,T_j]} + \L_{T_j} \circ i_X)(\om_J + i \om_K) - i d i_{T_j} d(J\alpha + i K\alpha) \\
&= i_X \L_{T_j} (\om_J + i \om_K) - i d i_{T_j} d(J\alpha + i K\alpha).
\end{align*}
From the degree zero part (in $\zeta$) of (\ref{darb}), one sees that $\L_{Z_j}(\om_J + i\om_K) = -2 i \om_I$ so that
\begin{gather*}
d\mu = i i_X \L_{T_j}(\om_J + i \om_K) = i i_X \L_{Z_j}(\om_J + i \om_K) = 2 i_X \om_I = 2\alpha.
\end{gather*}

\end{proof}

Now let
\[
A = \frac{1}{\zeta} (J\alpha + i K\alpha) + 2i I\alpha + \zeta(J\alpha - iK\alpha),
\]
which is a 1-form on $Z$ of type (1,0).  Then from (\ref{resCinfty}) we see that
\[
\left(\zeta\left(A + \frac{\mu}{\zeta} d\zeta\right)\right)_{\zeta = 0} = (\zeta\phi_j)\vert_{\zeta = 0}.
\]
so that $\phi_j - A - \frac{\mu}{\zeta} d\zeta$ is non-singular at $\zeta = 0$.

Since $\overline{\tau^*}(A + \frac{\mu}{\zeta}d\zeta) = -A - \frac{\mu}{\zeta} d\zeta$, we see that $\{\phi_j - A - \frac{\mu}{\zeta} d\zeta, -\overline{\tau^*}(\phi_j) - A - \frac{\mu}{\zeta} d\zeta\}$ is a non-singular connection for the Lie algebroid constructed in the previous section.  Thus the Atiyah class is represented by $\bar\partial(\phi_j - A - \frac{\mu}{\zeta} d\zeta)$.

From proposition \ref{prop:phij} we have
\[
\bar\partial \phi_j = \frac{1}{\zeta} F + \zeta \bar F.
\]
Now we have
\begin{align*}
dA = \frac{1}{\zeta}(\om_J + i \om_K + F) + 2i d(I\alpha) + \zeta(\om_J - i \om_K + \overline F) \\
- \frac{1}{\zeta} d\zeta \w \left(\frac{1}{\zeta} (J\alpha + i K\alpha) - \zeta(J\alpha - i K\alpha)\right).
\end{align*}
Now, since $\frac{1}{\zeta}(\om_J + i\om_K) + 2 i \om_I + \zeta(\om_J - i\om_K)$ is of type (2,0), we have
\[
\left(\frac{1}{\zeta}(\om_J + i\om_K) + 2 i \om_I + \zeta(\om_J - i\om_K)\right)_{1,1} = -(2i\om_I)_{1,1}.
\]
Therefore, since $2 i d(I\alpha) - 2 i \om_I$ is of type (1,1) we see that
\[
\bar\partial\left(\phi_j - A - \frac{\mu}{\zeta} d\zeta\right) = 2 i \om_I  - 2 i d(I\alpha) + \frac{1}{\zeta} d\zeta \w \hspace{-2pt} \left(\frac{1}{\zeta} (J\alpha + i K\alpha) - \zeta(J\alpha - i K\alpha) + 2\alpha \right)_{0,1}. 
\]
But this last term vanishes since 
\begin{align*}
\frac{1}{\zeta}(J\alpha + & i K\alpha) + 2\alpha - \zeta(J\alpha - i K\alpha) \\
&= i\left( \frac{1}{\zeta} (J(I\alpha) + i K(I\alpha)) + 2 i I(I\alpha) + \zeta(J(I\alpha) - i K(I\alpha) \right)
\end{align*}
is of type (1,0).  This proves theorem \ref{thm:chernclass}.

\bibliography{biblio}
\bibliographystyle{plain}

\end{document}